\documentclass[11pt, dvipsnames]{article}
\usepackage[margin=1in]{geometry}
\pdfoutput =1
\usepackage{setspace}
\linespread{1.5}

\usepackage{amsmath}
\usepackage{amssymb}
\usepackage[linesnumbered, ruled,lined]{algorithm2e}
\usepackage{varwidth}
\usepackage{booktabs}
\usepackage{algpseudocode}
\allowdisplaybreaks
\usepackage{hyperref}
\makeatletter
\newcommand{\printfnsymbol}[1]{%
  \textsuperscript{\@fnsymbol{#1}}%
}
\makeatother
\usepackage{graphicx}
\usepackage{caption}
\usepackage{color}
\usepackage{bbm}
\usepackage{float}

\usepackage{amsthm}
\usepackage{dsfont}
\usepackage{mathrsfs}
\usepackage{lmodern}
\usepackage{hyperref}

\usepackage{mathtools}
\usepackage{verbatim}
\usepackage{subcaption}
\usepackage{enumitem}
\usepackage{amsthm}
\usepackage{macros}
\newcommand{\ra}{\rightarrow}

\usepackage{booktabs}

\usepackage{xcolor}

% Keywords command

\usepackage{appendix}
\title{Follower Agnostic Learning in Stackelberg Games
}

\author{Chinmay Maheshwari\thanks{$^{1}$CM, JC, and SS are with EECS,UC Berkeley, CA, United States.} , James Cheng$^\ast$, Shankar Sastry$^\ast$, Lillian Ratliff
\thanks{LR is with ECE, UW Seattle, WA, United States.
} ~ and Eric Mazumdar 
% <-this % stops a space
% <-this % stops a space
% 
\thanks{EM is with CMS and Economics, Caltech, CA, United States.
} 
}

\begin{document}
\maketitle

\begin{abstract}
    In this paper, we present an efficient algorithm to solve online Stackelberg games, featuring multiple followers, in a follower-agnostic manner. Unlike previous works, our approach works even when leader has no knowledge about the followers' utility functions or strategy space.  Our algorithm introduces a unique gradient estimator, leveraging specially 
designed strategies to probe followers. In a departure from traditional assumptions of optimal play, we model followers' responses using a convergent adaptation rule, allowing for realistic and dynamic interactions. 
The leader constructs the gradient estimator solely based on observations of followers' actions. We provide both non-asymptotic convergence rates to stationary points of the leader's objective and demonstrate 
asymptotic convergence to a \emph{local Stackelberg equilibrium}. To validate the effectiveness of our algorithm, we use this algorithm to solve the problem of incentive design on a large-scale transportation network, 
showcasing its robustness even when the leader lacks access to followers' demand.
\end{abstract}
\newpage 

\section{INTRODUCTION}
Stackelberg games encompass a wide range of practical problems including incentive design, Bayesian persuasion, inverse optimization, bilevel optimization, cybersecurity, adversarial learning (\cite{liu2021inducing, aswani2018inverse,bard2013practical,pedregosa2016hyperparameter,liu2021investigating,sinha2017review,colson2007overview,li2022differentiable,yue2017stackelberg, maheshwari2022zeroth}), to name a few. 
Stackelberg games are comprised of two type of players -- \emph{leader} and \emph{followers}\footnote{We shall interchangeably use the the word ``leader'' with ``upper-level'' and ``follower'' with ``lower-level''.}. Mathematically, they are represented as follows: 
\begin{equation}
\begin{aligned}\label{eq: BiOptInt}
    &\min_{x\in X, y\in Y}&&\highOpt(x,y) \\ 
    &\text{s.t.} && y\in S(x) := \SOL(Y,G(x,\cdot))), 
\end{aligned}
\end{equation}
where \(X\) is the leader's strategy set, \(Y\) is the followers' (joint) strategy set,
\(\highOpt:X\times Y \ra \R \) 
is the utility of the \emph{leader}, 
\(G: X\times Y\ra \R\) 
is the \emph{game Jacobian} of \emph{followers} and
\(\SOL(Y,G(x,\cdot))\) is a \emph{variational inequality problem} that denotes the equilibrium response of followers, given the strategy of leader be \(x\in X\). 
Assuming that the set \(\br(x)\) is singleton for every \(x\in X\), aka \emph{lower-level singleton assumption}, \eqref{eq: BiOptInt} is equivalent to optimizing the following \emph{hyper-objective}:
\begin{align}\label{eq: HyperObj}
    \min_{x\in X} \tilde{f}(x) := f(x,\br(x)).
\end{align}
Note that in general  \eqref{eq: HyperObj} is non-convex optimization problem. Thus, the goal in Stackelberg games is to find a stationary point / local optima of \eqref{eq: HyperObj} (\cite{fiez2019convergence}). 

In numerous practical scenarios, it is unrealistic to presume that the leader possesses any information regarding the variational inequality problem at the lower-level, including 
\(\SOL(\cdot)\) and even their strategy set \(Y\) -- information traditionally assumed in prior research on solving Stackelberg games. 
Thus, the key question we ask in this work is: 
\begin{quote}
    \emph{Can we design efficient algorithms for Stackelberg games where the leader does not require explicit knowledge of the game played between followers?}
\end{quote}
In this work, we affirmatively answer the above question in the setting where the leader can only probe the followers with different strategies and receive estimates of their (approximated) equilibrium responses. This is in contrast to the common assumption in the literature on Stackelberg games, where it is assumed that the leader has access to an equilibrium or best-response of followers either by knowledge of the utility function of followers or through an oracle. In particular, we consider that followers are rational in the sense that they employ an adaptation/learning algorithm, which asymptotically converges to the equilibrium (\cite{fudenberg1998theory}).

We propose a \emph{two-loop} algorithm where, in the outer loop, the leader fixes its strategy (i.e., the value of $x$) and announces it to the followers. Between two updates of the leader's strategy, the followers employ an adaptation algorithm, for a finite number of steps, so that they converge to an \emph{approximate} equilibrium (or best-response). Upon observing the followers' behavior, the leader constructs an approximate estimator of the gradient of the hyper-objective \eqref{eq: HyperObj} and updates its strategy via gradient descent using the estimator.

We show that the proposed algorithm converges to the stationary point of \eqref{eq: HyperObj} at a rate \(\mathcal{O}(T^{-1/2})\). Moreover, we show that if the hyper-objective satisfies the \emph{strict-saddle property}, i.e. the minimum eigenvalue at any saddle point is strictly negative, then the iterates asymptotically avoid saddle points (which include local maxima) and converge to a local minima of the hyper-objective (aka local Stackelberg equilibrium \cite{fiez2019convergence}). 

We corroborate the theoretical results by conducting a simulated study of the proposed algorithm to design tolls over the Sioux Falls (South Dakota, US) transportation network. In this setup, we assume that the leader does not know the origin-destination (o-d) demand of travelers moving between different o-d pairs, which is sensitive information.

\subsection{Related works}
\textbf{Learning in Stackelberg games:} Learning in Stackelberg games with \emph{finite} actions is a very active area of research (\cite{blum2014learning, letchford2009learning, peng2019learning, bai2021sample, sessa2020learning}), where the leader has access to either a noisy or exact best response oracle. Furthermore, a dominant paradigm in this literature is to consider two-player games with finite strategy sets or linearly parametrized utility functions, with the exception of \cite{fiez2019convergence} where the author studies the convergence of a two-timescale algorithm to the Stackelberg equilibrium, requiring knowledge of the Hessian of followers' utility functions for leader updates. However, in this work, we aim to design follower-agnostic learning in a general-sum Stackelberg game in continuous spaces with \emph{no} knowledge of the followers' utility functions.

\textbf{Bilevel optimzation.} 
Bilevel optimization, a subset of problem \eqref{eq: BiOptInt}, is extensively studied in literature, resembling a Stackelberg game with a single leader and follower. Existing research on bilevel optimization pursues three main approaches. The first utilizes a value function-based approach, converting the problem into a constrained single-level optimization problem with convergence guarantees to approximate Karush-Kuhn-Tucker (KKT) points \cite{sow2022constrained, ye2022bome}. However, such points may not capture locally optimal solutions \cite{chen2023bilevel}. Another line of research focuses on asymptotic convergence of solutions of simpler bilevel problems than \eqref{eq: BiOptInt} under various assumptions on the lower-level objective function structure \cite{liu2020generic,liu2021value,liu2021towards}. The third line explores solving the non-convex optimization problem \eqref{eq: HyperObj} using gradient descent, requiring the computation of the gradient of the solution mapping, denoted as \(\nabla \br(x)\). While many methods exist for approximating \(\nabla \br(x)\), including \emph{Automatic Implicit Differentiation} (AID) (\cite{grazzi2020iteration,franceschi2017forward,pedregosa2016hyperparameter,ji2020bilevel,franceschi2018bilevel}), or  \emph{Iterative Differentiation}  (\cite{franceschi2017forward,ghadimi2018approximation,gould2016differentiating,shaban2019truncated}), our work is closely related to zeroth-order methods, specifically avoiding the computation of the Hessian (\cite{chen2023bilevel}). Our proposed algorithm shares similarities with \cite{chen2023bilevel}, but we eliminate the need for oracle access to a lower-level optimal solution, leveraging two-timescale stochastic approximation to analyze accumulated errors \cite{chen2023bilevel}.

\section{PROBLEM FORMULATION}\label{sec: ProblemFormulation}
Consider the following Stackelberg game
\begin{align}\label{eq: SG}
    &\min_{x\in X, y\in Y} && \highOpt(x,y) \notag \\ & \text{such that}  && y\in S(x) := \SOL(Y,G(x,\cdot))),\tag{\textsf{SG}}
\end{align}
where \textit{(i)} \(X = \R^d\) and \(Y\subset \R^{d'}\) is assumed to be convex and compact set; \textit{(ii)} \(\highOpt:X\times Y \ra \R \) and \(G: X\times Y\ra \R\) are twice continuously differentiable functions; \textit{(iii)} \(\SOL(Y,G(x,\cdot))\)  denotes the solution to variational inequality characterized by functional \(G(x,\cdot)\). That is, \( 
    \SOL(Y,G(x,\cdot)) = \{y\in Y: \langle y' - y, G(x,y) \rangle \geq 0, \quad \forall \ y'\in Y\}.
\)
Under mild conditions on the monotonicity of \(G(x,\cdot)\), it is ensured that \(S(x)\) is non-empty and convex (\cite{facchinei2003finite}). 
 
 In what follows, we call a continuously differentiable function \(\tilde{f}:\R^d\ra \R\) to be \(L-\)Lipschitz if for every \(x,x'\in \mathbb{R}^d\), 
 \begin{align*}
 \|\tilde{f}(x)-\tilde{f}(x')\| \leq L\|x-x'\|.    
 \end{align*}
  Furthermore, we call it to be \(\ell-\)smooth if for every \(x,x'\in \mathbb{R}^d,\)
  \begin{align*}
\|\nabla \tilde{f}(x)- \nabla \tilde{f}(x')\| \leq \ell \|x-x'\|.      
  \end{align*}

Next, we introduce the main assumptions on the parameters of \eqref{eq: SG} made throughtout this paper 
\begin{assumption}\label{assm: BasicAssumptionSetup}
    \begin{itemize}
        \item[(1)] For every \(y\in Y\), the function \(f(\cdot,y)\) is \(\fxLipschitz\)-Lipschitz.
Additionally, for every \(x\in X\), the function \(f(x,\cdot)\) is
\(\fyLipschitz-\)Lipschitz and \(\fySmooth\)-smooth.
\item[(2)] For every \(x\in X,\) the set \(S(x)\) is singleton and function \(S(x)\) is \(\brLipschitz\)-Lipschitz.
\item[(3)] The function \(\tilde{f}(x) = f(x,S(x))\) is {twice-continuously differentiable}, \(\tildefLipschitz\)-Lipschitz and \(\tildefSmooth-\)smooth. 
    \end{itemize}
\end{assumption}

\section{MOTIVATING EXAMPLE: INCENTIVE DESIGN IN ROUTING GAMES}\label{sec: MotivatingExample}
Consider a transportation network \(\mc{G} =(\mc{N},\mc{E})\) comprised of set of nodes \(\mc{N}\) and set of edges \(\mc{E}\), used by self-interested (infinitesimal)travelers. Each traveler is traveling between some origin-destination (o-d) pair on the network. The set of all o-d pairs be denoted by \(\mc{Z}\). For each o-d pair \(z \in \mc{Z}\), let \(\mc{R}_z\) be the set of routes connecting the o-d pair \(z\). Let \(D_z\) be the demand of travelers traveling between o-d pair \(z\in \mc{Z}\) and 
\(y_{rz}\) be the flow of 
travelers from o-d pair \(z\in \mc{Z}\) that choose route \(r\in \mc{R}_z\). Naturally, \(\sum_{r\in\mc{R}_z}y_{rz}=D_z,\) for every \(z\in \mathcal{Z}\). We denote the set of all feasible route flows by \(Y = \prod_{z\in \mc{Z}}Y_z,\) where \(Y_z := D_z\cdot \Delta(\mb{R}^{|\mc{R}_z|})\) is a simplex.
The route flow gives rise of 
congestion on the edges of the network. Given a route flow \(y\in Y\), the resulting congestion on edges is denoted by \(w(y) = (w_e(y))_{e\in \mc{E}},\) where 
\begin{align}\label{eq: EdgeCongestion}
w_e(y) = \sum_{z\in \mc{Z}}\sum_{r\in \mc{R}_z} y_{rz}\mathbf{1}(e\in r), \quad \forall \ e\in \mathcal{E}.    
\end{align}
Higher congestion leads to higher travel time on any edge. More formally, let \(\ell_e(\cdot)\) be a strictly increasing smooth function which denotes the travel time of using edge \(e\in \mc{E}\) as a function of congestion.
A social planner can alter the congestion levels on the network by imposing tolls on the edges of the network which changes the preferences of travelers for different routes. Let \(x_e\in \mb{R}\) denote the tolls imposed on edge  \(e\in \mc{E}\). \footnote{Here, we allow for tolls to take negative values. Such tolling scheme can  be implemented by considering revenue-refunding schemes.} Under the network tolls \(x=(x_e)_{e\in \mc{E}}\in \mathbb{R}^{|\mc{E}|}\) and route flow \(y\in Y\), the overall cost experienced by travelers from o-d pair \(z\in \mc{Z}\) taking a route \(r\in \mc{R}_z\) is
\begin{align}\label{eq: route_cost}
    c_{r}(y,x) = \sum_{e\in r}\ell_e(w_e(y)) + x_e.
\end{align}

Given a fixed network tolls \(x\), the resulting congestion -- \emph{Wardrop equilibrium} -- can be obtained by solving the following strictly convex optimization problem (\cite{patriksson2015traffic})
{\begin{align}\label{eq: WE}
    S(x) = \underset{y\in Y}{\arg\min}~\Phi(y,x) = \sum_{e\in \mc{E}}\int_{0}^{w_e(y)}\ell_e(\theta) d\theta + x_ew_e(y). 
\end{align}}
The goal of social planner is to minimize the overall congestion on the network while also minimizing the tolls levied on travelers. More formally, the planner's objective function is given by \(
    f(x,y)= \sum_{e\in \mc{E}}w_e(y)\ell_e(w_e(y)) + \lambda \|x\|^2,
\)
where the first term corresponds to the average congestion on the network and second term is a regularization term with parameter \(\lambda> 0\), which ensures low values of tolls. 
Thus, the problem of toll design is as follows
\begin{align}\label{eq: TollingScheme}
    &\min_{x\in \mathbb{R}^{|\mc{E}|}, y\in Y} && f(x,y)\notag\\ 
    &\quad \quad \text{s.t. } && y\in S(x) = \underset{y'\in Y}{\arg\min}~\Phi(y,x),
\end{align}
which is an instantiation of \eqref{eq: SG}.
\begin{remark}
In order to compute \(S(x)\) in \eqref{eq: WE} the planner needs to know  the set \(Y\) that requires knowledge of the demand of travelers between various o-d pairs, which is a sensitive information. In Section \ref{sec: Numerics}, we use the approach in this paper to solve \eqref{eq: TollingScheme} where the incentive designer does not know the o-d demand of travelers and can only observe the congestion levels \((w_e)_{e\in\mc{E}}\) on the network in response to the set tolls.
\end{remark}

\section{ALGORITHM AND ANALYSIS}\label{sec: Algorithm}
In this section, we present a follower agnostic algorithm for solving \eqref{eq: SG}. Following which, we present the convergence guarantees of the proposed algorithm to a stationary point. Additionally, we show that the algorithm will eventually converge to a local optima by avoiding the saddle points and local maximum.  
\subsection{Algorithmic structure}
 The algorithm is based on alternatively moving towards solution to the variational inequality at lower level and descending along the upper-level objective function.
    Specifically, between two updates of leader (upper-level), the followers (lower level) employ an iterative adaptive rule, aimed to solve the variational inequality \(\SOL(\cdot)\), for a fixed number of steps. Following which, the upper level iterates descend along an ``approximated'' gradient estimator, inspired from zeroth-order optimization (\cite{spall1997one, flaxman2004online}), evaluated at the lower-level iteration in current round.
\paragraph{Leader's strategy update}
The leader's update rule is as follows: 
\begin{align}\label{eq: UpperLevel}
    x_{t+1} = x_t - \stepX_t \ZO(x_t;\delta_t,v_t),   \tag{\textsf{UL}}
\end{align}
where \(\ZO(x;\delta,v)\) denotes a gradient estimator of function \(\tilde{f}(\cdot) := f(\cdot,\br(\cdot))\), evaluated at \(x\) with parameters \(\delta,v\). We shall describe the estimator in detail below.  
\paragraph{Gradient estimator}
In order to compute the gradient of \(\tilde{f}(x)\), we need to compute the derivative through the solution to the variational inequality in \eqref{eq: SG}, i.e. \(S(x)\), which may involve higher order gradient computations and at times is not computable in closed form due to constraints. In this work, we consider a gradient estimator inspired from  \cite{spall1997one,flaxman2004online}.  Specifically, we consider the following estimator 
\begin{align}\label{eq: GradEstimator}
    \ZO(x;\delta,v):= \frac{d}{\delta}\left( f(\hat{x},y^{(K)}(\hat{x})) - f(x,y^{(K)}(x)) \right) v,
\end{align}
such that \textit{(i)} \(\hat{x} = x+\delta v\), where \(v\in \mathcal{S}(\R^d):= \{z\in \R^d:\|z\|_2=1\}\) and \(\delta > 0\), are  referred as \emph{perturbation} and \emph{perturbation radius} respectively; \textit{(ii)} \(K\) is a positive integer capturing the number of rounds of adaptation rule employed by followers between two updates of leader's strategy; \textit{(iii)} for any \(x\in X\), \(k\in[K-1]\) consider a iterative solver for variational inequality denoted by \(H\) such that 
    \begin{align}\label{eq: LowerLevel}
        y^{(k+1)}(x) = H_k(y^{(k)}(x);x), \quad\forall \ k \in [K-1],\tag{\textsf{LL}}
    \end{align}
    where \(y^{(0)}\) is some initialization for the iterative solver of variational 
    inequality. 
    For example, when the lower level problem is just a convex optimization problem with objective function \(g(x,\cdot)\), a typical choice of \(H_k\) is projected 
 gradient descent, i.e. \( H_k(y;x)= \proj_Y(y-\stepY_k \nabla_y g(x,y)),\) where \(\proj_Y\) denotes the orthogonal projection on \(Y\) and \(\gamma_k\) is the step size. Note that, in order to construct the gradient estimator in \eqref{eq: GradEstimator}, the leader \emph{need not} know the exact description of update rule \(H_k\). For most of the paper, we shall concisely denote \(y^{(k)}(x)\) and \(y^{(k)}(\hat{x})\) as \(\tilde{y}^{(k)}\) and \(y^{(k)}\) respectively. 

\begin{remark}\label{rem: Diff_Standard_ZO}
Direct application of zeroth-order gradient estimator from \cite{spall1997one,flaxman2004online} would result in following estimator
% We note that the gradient estimator \eqref{eq: GradEstimator} is not directly the zeroth-order gradient estimator from \cite{spall1997one,flaxman2004online}. 
% Indeed, if we were to use the zeroth-order estimator introduced in those paper our estimator would be take the following form:
\begin{align}\label{eq: ZOOriginal}
\ZOOrig(x;\delta,v) = \frac{d}{\delta}\left( \tilde{f}(\hat{x}) - \tilde{f}(x) \right) v,
\end{align}
where \(\tilde{f}\) is defined in \eqref{eq: HyperObj}.
Observe that the gradient estimators  \(\ZO\) and \(\ZOOrig\) differ because in \eqref{eq: GradEstimator} we evaluate \(f(x,\cdot)\) at   \(y^{(K)}(x)\) while in \eqref{eq: ZOOriginal} we evaluated it at \( \br(x)\) for any \(x\in \R^d\). This induces additional bias in the gradient estimator that needs to be appropriately accounted while establishing convergence results.  
\end{remark}

\paragraph{Algorithm}
The algorithms runs for \(T\) rounds. In every round \(t\in [T-1]\) the leader queries the followers with two strategies \(x_t\) and \(\hat{x}_t = x_t + \delta_t v_t\) where \(v_t \sim \textsf{Unif}(\mc{S}(\mb{R}^d))\) is a vector sampled uniformly randomly from the unit sphere in \(\mb{R}^d\) and \(\delta_t\) is the \emph{perturbation radius} (refer line \textbf{2}-\textbf{3} in Algorithm \ref{alg: ZerothOrderTwoPointAlgorithm}). 
\begin{algorithm}
    \caption{ Follower Agnostic Stackelberg Optimization Algorithm}
 \label{alg: ZerothOrderTwoPointAlgorithm}
\textbf{Input:} {Time horizon \(T\),  Initial conditions \(y_0^{(0)} \in Y, \tilde{y}_0^{(0)}\in Y, x_0\in X\), Step sizes \((\eta_t)\), 
 
 \quad\quad \ \  Perturbation radius \((\delta_t)\)}
 \For{\(t= 0,1,...T-1\)}
 {
 Sample \(v_t\sim \textsf{Unif}(\mathcal{S}(\R^d))\)
 
 Assign \(\hat{x}_t = x_t+\delta_t v_t\)
 
 \For{\(k=0,1,...,K-1\)}{  
 Update \(\tilde{y}_{t}^{(k+1)} = H_k(\tilde{y}_t^{(k)};x_t)\)
}

Update \(x_{t+1} = x_t - \stepX_t \frac{d}{\delta_t}\left( f(\hat{x}_t,y_t^{(K)}) - f(x_t,\tilde{y}_t^{(K)}) \right) v_t 
% \ZO(x_t;\delta_t,v_t)
\) 

Set \(y_{t+1}^{(0)} =\tilde{y}_{t+1}^{(0)} = \tilde{y}_t^{(K)}\)
 }
\end{algorithm}

The followers respond to the leader's strategies by using an iterative variational inequality solver for \(\yIter\) steps to obtain \(\tilde{y}_t^{(\yIter)}\)  and \(y_t^{(\yIter)}\) respectively (refer line \textbf{4} and \textbf{7} in Algorithm \ref{alg: ZerothOrderTwoPointAlgorithm}). 
% Under suitable assumptions, we expect \(y_t^{(\yIter)}\rightarrow S(\hat{x})\) and \(\tilde{y}_t^{(\yIter)}\rightarrow S(x)\). 
After observing \(\tilde{y}_t^{(\yIter)}\) and \({y}_t^{(\yIter)}\), the leader computes a gradient estimator as per \eqref{eq: GradEstimator}. 
% However to compute the leader favorable outcome over the solution set \(S(\cdot)\) the leader further performs a gradient descent along \(\nabla_y f(x_t,\cdot)\) for \(\zIter\) steps (refer line \textbf{10}-\textbf{12} respectively), denoted by \(\tilde{z}_t^{\zIter}\) which is initialized at \(\tilde{y}_t^{(\yIter)}\).
% Finally, the leader recommends an intialization of follower updates for next round by taking a convex combination of \(\tilde{y}_t^{(\yIter)}\) and \(z_t^{(\zIter)}\) (refer line \textbf{13} in Algorithm \ref{alg: ZerothOrderTwoPointAlgorithmOptimistic3}).
The leader updates its strategy for next time as per \eqref{eq: UpperLevel} (refer line \textbf{8} in Algorithm \ref{alg: ZerothOrderTwoPointAlgorithm}). The followers initialize their strategies as per line \textbf{9} in Algorithm \ref{alg: ZerothOrderTwoPointAlgorithm}. 

\subsection{Convergence to stationary points}

We now study the convergence properties of Algorithm \ref{alg: ZerothOrderTwoPointAlgorithm}. 
\begin{assumption}\label{assm: FollowerUpdatesDiff}
For any \(x,\hat{x}\in X\), the updates in \eqref{eq: LowerLevel} are such that \(
        \|y^{(\yIter)}(x)-y^{(\yIter)}(\hat{x})\| \leq C \|x-\hat{x}\|, 
    \)
    for some \(C>0\). 
\end{assumption}
Assumption \ref{assm: FollowerUpdatesDiff} posits that the adaptation rule employed by followers is stable with respect to perturbations in the leader's strategy. This assumption is typically satisfied by many algorithms, including gradient-based algorithms. 
\begin{assumption}\label{assm: FollowerUpdatesConvergence}
Atleast one of the following holds:
    \begin{itemize}
    \item[(1a)] For any \(x\in X,\) the iterates \eqref{eq: LowerLevel} converge to equilibrium at a polynomial rate. That is, for any initial point \(y^{(0)}\in Y\),\(
    \|y^{(K)}(x)-\br(x)\|^2 \leq CK^{-\lambda} \|y^{(0)}-\br(x)\|^2,
\)
where \(\lambda,C\) are positive scalars. 
\item[(1b)] For any \(x\in X,\) the iterates \eqref{eq: LowerLevel} converge to equilibrium at a exponential rate. That is, for any initial point \(y^{(0)}\),
\(
    \|y^{(K)}(x)-\br(x)\| \leq C\rho^K \|y^{(0)}-\br(x)\|,
\)
where \(C\) is a positive scalar and \(\rho\in [0,1)\).
    \end{itemize}
\end{assumption}

\begin{remark}
Convergence of lower-level problem is extensively studied in  literature, e.g. \cite{nesterov2018lectures,wright2022optimization},  and is not the focus of this article. Assumption \ref{assm: FollowerUpdatesConvergence}(1a) holds for gradient descent updates for convex functions that satisfy \emph{quadratic growth} condition \cite{karimi2016linear}.
Meanwhile, Assumption \ref{assm: FollowerUpdatesConvergence}(1b) holds for gradient descent on strongly convex functions.  
\end{remark}

\begin{theorem}\label{thm: ConvergenceStationary}
Let Assumption \ref{assm: BasicAssumptionSetup}-\ref{assm: FollowerUpdatesConvergence} hold. If we choose \(\eta_t = \bar{\eta} (t+1)^{-1/2}d^{-1}, \delta_t = \bar{\delta} (t+1)^{-1/4}d^{-1/2}\) such that \(\bar{\eta}\leq d/2\tildefSmooth\). 
% Furthermore, if \(\gamma \leq \frac{1}{2K\brLipschitz } \log(\sqrt{2\brLipschitz })\)
% and \(K\geq T^{3/{2\lambda}}d^{2/\lambda}\)
Then the updates \((x_t)_{t\in[T]}\) in Algorithm \ref{alg: ZerothOrderTwoPointAlgorithm} are such that 
\begin{align*}
    \min_{t\in[T]}\avg\ls{\|\nabla \tilde{f}(x_t)\|^2} \leq \tilde{O}\lr{\frac{d}{\sqrt{T}} + \frac{\alpha}{1-\alpha}{d^3}\left(1+\frac{1}{\sqrt{T}}\right)},
    % \tilde{O}\lr{\frac{d}{\sqrt{T}} + \alpha d^3},
\end{align*}
where \(\alpha = CK^{-\lambda}\) if Assumption \ref{assm: FollowerUpdatesConvergence}(1a) hold, or \(\alpha = \rho^K\) if Assumption \ref{assm: FollowerUpdatesConvergence}(1b)  hold. 
\end{theorem}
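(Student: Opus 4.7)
The plan is to adapt the classical zeroth-order stochastic descent analysis of Flaxman--Spall to the Stackelberg setting of \eqref{eq: SG}, while carefully absorbing the extra bias and stochasticity introduced by evaluating $\highOpt$ at the inexact lower-level iterates $y_t^{(K)}, \tilde y_t^{(K)}$ rather than at the exact equilibria $S(\hat x_t), S(x_t)$. The starting point is the $\tildefSmooth$-smoothness descent inequality
\begin{align*}
\tilde f(x_{t+1}) \leq \tilde f(x_t) - \stepX_t \langle \nabla \tilde f(x_t), \ZO_t\rangle + \tfrac{\tildefSmooth}{2}\stepX_t^2 \|\ZO_t\|^2,
\end{align*}
together with the decomposition $\ZO_t = \ZOOrig_t + \mathcal{R}_t$, where $\ZOOrig_t = \tfrac{d}{\delta_t}(\tilde f(\hat x_t) - \tilde f(x_t))\, v_t$ is the exact two-point estimator of \eqref{eq: ZOOriginal} and $\mathcal{R}_t$ is the residual coming from evaluating $\highOpt$ at the iterates instead of at the true equilibria. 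A standard ball-smoothing identity gives $\mathbb{E}_v[\ZOOrig_t] = \nabla \tilde f_{\delta_t}(x_t)$ with $\|\nabla \tilde f_{\delta_t}(x) - \nabla \tilde f(x)\| \leq \tildefSmooth \delta_t$, while Lipschitzness of $\highOpt$ in $y$ (Assumption \ref{assm: BasicAssumptionSetup}(1)) yields $\|\mathcal{R}_t\| \leq \tfrac{d\,\fyLipschitz}{\delta_t}\bigl(\|y_t^{(K)} - S(\hat x_t)\| + \|\tilde y_t^{(K)} - S(x_t)\|\bigr)$, so the deviation of the estimator from the true gradient is controlled by the accumulated lower-level error.

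The next step is to track this lower-level error $E_t := \|\tilde y_t^{(0)} - S(x_t)\|$, which is propagated across outer iterations by the warm-start in line \textbf{9} of Algorithm \ref{alg: ZerothOrderTwoPointAlgorithm}. Assumption \ref{assm: FollowerUpdatesConvergence} gives $\|\tilde y_t^{(K)} - S(x_t)\| \leq \sqrt{\alpha}\, E_t$ and, combined with $\brLipschitz$-Lipschitzness of $S$, $\|y_t^{(K)} - S(\hat x_t)\| \leq \sqrt{\alpha}(E_t + \brLipschitz \delta_t)$. Assumption \ref{assm: FollowerUpdatesDiff} forces the numerator of $\ZO_t$ to be $O(\delta_t)$, whence $\|\ZO_t\| \leq (\fxLipschitz + C\fyLipschitz) d$ uniformly; combining this bound with the recursion $E_{t+1} \leq \sqrt{\alpha}\, E_t + \brLipschitz \stepX_t \|\ZO_t\|$ and the compactness of $Y$ delivers a uniform bound on $E_t$. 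Plugging this back yields $\|\mathbb{E}[\mathcal{R}_t]\| \lesssim d\sqrt{\alpha}/\delta_t$ and $\mathbb{E}[\|\ZO_t\|^2] \lesssim d^2$.

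Taking conditional expectations in the descent inequality, applying Young's inequality to the cross term $-\stepX_t \langle \nabla \tilde f(x_t),\, \mathbb{E}[\ZO_t] - \nabla \tilde f(x_t)\rangle$ so as to absorb half of $\stepX_t\|\nabla \tilde f(x_t)\|^2$ on the right, and telescoping over $t = 0, \ldots, T-1$ gives an inequality of the form
\begin{align*}
\tfrac{1}{2}\sum_{t=0}^{T-1} \stepX_t\,\mathbb{E}\|\nabla \tilde f(x_t)\|^2 \leq \tilde f(x_0) - \inf \tilde f + O\!\left(\sum_t \stepX_t \delta_t^2 + \sum_t \tfrac{\stepX_t d^2 \alpha}{\delta_t^2} + \sum_t \stepX_t^2 d^2\right).
\end{align*}
Plugging in $\stepX_t = \bar\eta (t+1)^{-1/2} d^{-1}$ and $\delta_t = \bar\delta (t+1)^{-1/4} d^{-1/2}$ one has $\sum_t \stepX_t \asymp \sqrt{T}/d$, and the three error sums---corresponding to smoothing bias, inexactness bias, and zeroth-order variance---balance after division to yield the claimed $\tilde O(d/\sqrt{T} + \alpha d^3)$ rate; the $\min_t$ on the left of the theorem statement then follows from the fact that the minimum lower-bounds the $\stepX_t$-weighted average.

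The main technical obstacle is the interplay between the zeroth-order variance (naively $d^2/\delta_t^2$, but tamed to $O(d^2)$ by Assumption \ref{assm: FollowerUpdatesDiff}) and the lower-level inexactness bias of order $d\sqrt{\alpha}/\delta_t$, which together dictate the particular polynomial schedules $\stepX_t \propto t^{-1/2}$ and $\delta_t \propto t^{-1/4}$ chosen in the theorem. Controlling the warm-started error $E_t$ without allowing it to feed back through $\|\ZO_t\|$ and inflate the estimator variance is the most delicate bootstrap step, relying crucially on the contraction in Assumption \ref{assm: FollowerUpdatesConvergence} and the uniform boundedness of $Y$.
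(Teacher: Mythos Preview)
Your overall architecture matches the paper's: the $\tildefSmooth$-smoothness descent inequality, the decomposition of the estimator into an exact two-point piece plus an inexact-lower-level residual, Young's inequality on the cross term, and telescoping with the stated schedules. The paper packages this as a three-term error split $\mc E_t^{(1)}+\mc E_t^{(2)}+\mc E_t^{(3)}$ (Lemma~\ref{lem: TildeFPotential} and Lemma~\ref{lem: ErrorBounds}), which is equivalent to your $\ZOOrig_t+\mathcal R_t$.

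The gap is in how you control $\mathcal R_t$. You correctly write down the recursion $E_{t+1}\le \sqrt{\alpha}\,E_t+\brLipschitz\,\eta_t\|\ZO_t\|$, but then abandon it in favor of a \emph{uniform} bound on $E_t$ via compactness of $Y$, obtaining $\|\mathcal R_t\|\lesssim d\sqrt{\alpha}/\delta_t$ with an $O(1)$ hidden constant. With the prescribed schedules one has $\eta_t/\delta_t^2=\bar\eta/\bar\delta^2$ \emph{constant in $t$}, so your middle error sum is
\[
\sum_{t=0}^{T-1}\frac{\eta_t\,d^2\alpha}{\delta_t^2}\;=\;\frac{\bar\eta}{\bar\delta^2}\,d^2\alpha\,T,
\]
and after dividing by $\sum_t\eta_t\asymp\sqrt{T}/d$ this contributes $O(d^3\alpha\sqrt{T})$, not $O(d^3\alpha)$. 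The claim that the three sums ``balance to yield the claimed rate'' therefore fails: the inexactness term grows with $T$.

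What is needed---and what the paper does---is to \emph{unroll} the recursion rather than cap it. From your own ingredients ($\|\ZO_t\|\le(\fxLipschitz+C\fyLipschitz)d$ and the contraction in Assumption~\ref{assm: FollowerUpdatesConvergence}) one gets $\|\tilde y_t^{(K)}-\br(x_t)\|^2\lesssim \alpha^t\,\tilde e_0+\sum_{k<t}\alpha^{t-k}(d^2\eta_k^2+\delta_k^2)$, which is exactly the paper's bound on $\|\mc E_t^{(3)}\|^2$ in Lemma~\ref{lem: ErrorBounds}. The crucial payoff is that when you form $\sum_t(\eta_t/\delta_t^2)\sum_{k<t}\alpha^{t-k}\delta_k^2$ and \emph{swap the order of summation}, the inner geometric series yields a factor $\alpha/(1-\alpha)$ and leaves only $\sum_k\delta_k^2=O(\sqrt T)$; after dividing by $\sum_t\eta_t$ the $\sqrt T$ cancels and you recover $O(d^3\alpha)$. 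Your uniform bound replaces the warm-started error---which at steady state tracks $O(\eta_t)$---by a constant, and thereby loses exactly this cancellation.
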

Intuitively, the theorem states that if we want to converge closer to a stationary point then we need to run the Algorithm \ref{alg: ZerothOrderTwoPointAlgorithm} with larger \(T\) or smaller \(\alpha\) (i.e. larger \(K\)). 
Crucially, the term \(\alpha d^3\) in the bound is due to error accumulation between time steps due to non-convergence of lower-level to exact solution of variational inequality \(S(x)\). Owing to such precise characterization of error accumulation across time steps, our rate
% is actually significant departure from other works \cite{chen2023bilevel}. Consequently, our rate
is informative of the \emph{computational complexity} of solving the bi-level problem while in other contemporary work, namely \cite{chen2023bilevel}, it resembles \emph{iteration complexity} of the oracle.
Since \(\alpha\) is a function of \(K\), the number of lower level iterations in every round, we can choose \(K\) to be large enough to make sure that the algorithm converges closer to the stationary point. 

\begin{corollary}\label{cor: Conv}
Let Assumption \ref{assm: BasicAssumptionSetup}-\ref{assm: FollowerUpdatesDiff} and Assumption \ref{assm: FollowerUpdatesConvergence}(1a) hold. Set \(\eta_t = \bar{\eta} (t+1)^{-1/2}d^{-1}, \delta_t = \bar{\delta} (t+1)^{-1/4}d^{-1/2}\) such that \(\bar{\eta}\leq d/2\tildefSmooth\). Additionally, set 
%\(\gamma \leq \frac{1}{2K\brLipschitz } \log(\sqrt{2\brLipschitz })\) and 
\(K \geq T^{1/{2\lambda}}d^{2/\lambda}\). 
Then the iterates of Algorithm \ref{alg: ZerothOrderTwoPointAlgorithm} satisfy 
\begin{align*}
\min_{t\in[T]}\avg\ls{\|\nabla \tilde{f}(x_t)\|^2} \leq \tilde{O}\lr{\frac{d}{\sqrt{T}}}.
\end{align*}

\end{corollary}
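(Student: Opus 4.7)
The plan is to derive Corollary \ref{cor: Conv} directly from Theorem \ref{thm: ConvergenceStationary}, since the step sizes $\eta_t$ and perturbation radii $\delta_t$ prescribed in the corollary are identical to those required by the theorem, and Assumption \ref{assm: FollowerUpdatesConvergence}(1a) is precisely the assumption under which the theorem specializes $\alpha$ to $CK^{-\lambda}$. Thus the theorem already gives
\begin{align*}
\min_{t\in[T]}\avg\ls{\|\nabla \tilde{f}(x_t)\|^2} \leq \tilde{O}\!\lr{\frac{d}{\sqrt{T}} + CK^{-\lambda} d^3},
\end{align*}
and the only remaining task is to verify that the prescribed choice of $K$ makes the second term no larger than the first.

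The key algebraic check is that $K \geq T^{1/(2\lambda)} d^{2/\lambda}$ is equivalent to $K^{\lambda} \geq T^{1/2} d^{2}$, hence $K^{-\lambda} \leq T^{-1/2} d^{-2}$. Multiplying both sides by $C d^{3}$ gives $CK^{-\lambda} d^{3} \leq C \, d / \sqrt{T}$, so the two terms in the theorem's bound are of the same order and collapse (after absorbing constants into $\tilde{O}(\cdot)$) to the claimed $\tilde{O}(d/\sqrt{T})$ rate.

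There is no substantive obstacle here beyond the direct substitution; all the technical work — bounding the zeroth-order bias of $\ZO$ relative to $\ZOOrig$, controlling the inexactness of the lower-level iterates $y_t^{(K)}$ via Assumptions \ref{assm: FollowerUpdatesDiff}-\ref{assm: FollowerUpdatesConvergence}, and tuning the step size and perturbation schedules — is already carried out in the proof of Theorem \ref{thm: ConvergenceStationary}. The corollary simply records the natural trade-off: because the lower-level convergence guaranteed by Assumption \ref{assm: FollowerUpdatesConvergence}(1a) is only polynomial in $K$, one must perform polynomially many inner iterations (namely $K = \Omega(T^{1/(2\lambda)} d^{2/\lambda})$) to ensure the error accumulated from the imprecise follower response does not dominate the outer-loop rate. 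I would close the proof by briefly noting that a symmetric corollary under Assumption \ref{assm: FollowerUpdatesConvergence}(1b) would require only $K = \Omega(\log(Td)/\log(1/\rho))$, reflecting the much milder dependence on $T$ and $d$ afforded by exponential lower-level convergence.
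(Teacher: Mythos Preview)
Your proposal is correct and matches the paper's own (implicit) argument: the corollary is obtained by substituting $\alpha = CK^{-\lambda}$ into the bound of Theorem~\ref{thm: ConvergenceStationary} and choosing $K$ large enough so that $\alpha d^3 \leq d/\sqrt{T}$, exactly as you verify via $K \geq T^{1/(2\lambda)} d^{2/\lambda} \Rightarrow K^{-\lambda} d^3 \leq d/\sqrt{T}$. The closing remark about the logarithmic $K$ under Assumption~\ref{assm: FollowerUpdatesConvergence}(1b) is also precisely what the paper records in Corollary~\ref{cor: ConvExp}.
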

\begin{corollary}\label{cor: ConvExp}
Let Assumption \ref{assm: BasicAssumptionSetup}-\ref{assm: FollowerUpdatesDiff} and Assumption \ref{assm: FollowerUpdatesConvergence}(1b) hold.
Set \(\eta_t = \bar{\eta} (t+1)^{-1/2}d^{-1}, \delta_t = \bar{\delta} (t+1)^{-1/4}d^{-1/2}\) such that \(\bar{\eta}\leq d/2\tildefSmooth\). Additionally, set 
 \(K\geq (1/|\log(\rho)|)\lr{(1/2) \log(T)
 + 2\log(d)} \).
Then 
\begin{align*}
\min_{t\in[T]}\avg\ls{\|\nabla \tilde{f}(x_t)\|^2} \leq \tilde{O}\lr{\frac{d}{\sqrt{T}}}. 
\end{align*}
\end{corollary}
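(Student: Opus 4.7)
The plan is to deduce Corollary \ref{cor: ConvExp} as a direct consequence of Theorem \ref{thm: ConvergenceStationary} by choosing $K$ large enough to make the bias term $\alpha d^3 = \rho^K d^3$ no larger than the stochastic term $d/\sqrt{T}$. Since Theorem \ref{thm: ConvergenceStationary} is already stated in this general form with $\alpha = \rho^K$ under Assumption \ref{assm: FollowerUpdatesConvergence}(1b), the work reduces to (a) verifying the hypotheses of the theorem hold with a $K$-independent Lipschitz constant in Assumption \ref{assm: FollowerUpdatesDiff}, and (b) a short algebraic calculation selecting $K$.

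The first step is to show that under $\gamma \leq (1/(2K\brLipschitz)) \log(\sqrt{2\brLipschitz})$, the iterates $y^{(K)}(x)$ produced by the lower-level solver are Lipschitz in $x$ with a constant $C$ that does \emph{not} grow with $K$. For a projected-gradient type solver, differencing the recursion $y^{(k+1)}(x)=\proj_Y(y^{(k)}(x)-\gamma \nabla_y g(x,y^{(k)}(x)))$ at $x$ and $\hat x$ yields an inequality of the form $\|y^{(k+1)}(x)-y^{(k+1)}(\hat x)\| \leq (1+\gamma L)\|y^{(k)}(x)-y^{(k)}(\hat x)\| + \gamma L\|x-\hat x\|$ for a problem-dependent constant $L$ related to $\brLipschitz$. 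Unrolling gives $C \leq ((1+\gamma L)^K - 1)$, and the stated bound on $\gamma$ is precisely what forces $(1+\gamma L)^K \leq \sqrt{2\brLipschitz}$, so that $C$ can be taken as an absolute constant independent of $K$. This validates Assumption \ref{assm: FollowerUpdatesDiff} uniformly in $K$, which is essential since we will push $K$ to grow with $T$ and $d$ below.

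With the hypotheses verified, Theorem \ref{thm: ConvergenceStationary} immediately gives
\begin{equation*}
\min_{t\in[T]}\avg\ls{\|\nabla\tilde{f}(x_t)\|^2}
\;\leq\;\tilde{O}\!\lr{\frac{d}{\sqrt{T}}+\rho^K d^3}.
\end{equation*}
Now I substitute the choice $K\geq(1/|\log\rho|)\!\lr{(1/2)\log T + 2\log d}$. Since $\rho\in(0,1)$, $|\log\rho|=-\log\rho$, so this rearranges to $K\log(1/\rho)\geq \log(\sqrt{T}\,d^2)$, which is the same as $\rho^K \leq 1/(\sqrt{T}\,d^2)$. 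Multiplying by $d^3$ yields $\rho^K d^3 \leq d/\sqrt T$, so the bias term is absorbed into the first term and the conclusion $\tilde O(d/\sqrt T)$ follows.

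The main obstacle, and the only non-trivial ingredient beyond the algebraic substitution, is step one: controlling the $K$-dependence of the Lipschitz constant $C$ in Assumption \ref{assm: FollowerUpdatesDiff}. If one were to use a fixed step size $\gamma$ independent of $K$, then $(1+\gamma L)^K$ would blow up exponentially in $K$, and the resulting constants in Theorem \ref{thm: ConvergenceStationary} would degrade with $K$ and cancel the benefit of choosing $K$ large. The specific scaling $\gamma=\Theta(1/K)$ dictated by the stated bound is exactly what keeps the per-iterate amplification factor at $1+O(1/K)$, so its $K$-th power stays bounded; everything else is bookkeeping.
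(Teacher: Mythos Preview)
Your proposal is correct and follows the same two-step approach the paper intends: first invoking Theorem \ref{thm: ConvergenceStationary} with $\alpha=\rho^K$, then choosing $K$ so that $\rho^K d^3\le d/\sqrt{T}$. Your algebra in step two is exactly right.

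On step one, the paper handles the $K$-independence of the constant $C$ in Assumption \ref{assm: FollowerUpdatesDiff} via Lemma \ref{lem: BoundYIter} in the appendix, which uses a discrete Gronwall inequality (Lemma \ref{lem: DiscreteGronwall}) to obtain $C\le K\gxLipschitz\gamma\exp(\gamma\gyLipschitz K)$; your direct one-step unrolling giving $C\le (1+\gamma L)^K-1$ is an equivalent route, and both collapse to an $O(1)$ constant once $\gamma=\Theta(1/K)$ as dictated by the stated bound on $\gamma$. You correctly identify this as the only non-trivial point---without it, the hidden constants in Theorem \ref{thm: ConvergenceStationary} would grow with $K$ and defeat the purpose of increasing $K$. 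The paper leaves this connection between the $\gamma$ condition and the corollary largely implicit; your explanation is in fact more explicit than the paper's.
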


\begin{remark}
    We know that for non-convex smooth functions, gradient descent converges to a stationary point (at a rate of \(\mc{O}(1/\sqrt{T})\)). However, the key point of departure of \eqref{eq: UpperLevel} from standard gradient descent is the presence of bias in the gradient estimator. Consequently,
the key component of the proof is to bound the error in the gradient estimator \eqref{eq: GradEstimator}. This is because the estimator can be decomposed as \(
    \ZO(x_t;\delta_t,v_t) = \nabla \tilde{f}(x_t) + \mc{E}_t^{(1)} + \mc{E}_t^{(2)} + \mc{E}_t^{(3)},
\)
where 
\begin{align*}
    \mc{E}^{(1)}_t&:=\avg\ls{\ZOOrig(x_t;\delta_t,v_t)|x_t} -\nabla \tilde{f}(x_t), \\ 
\mc{E}^{(2)}_t&:=\ZOOrig(x_t;\delta_t,v_t)
    -\avg\ls{\ZOOrig(x_t;\delta_t,v_t)|x_t}, \\ 
\mc{E}^{(3)}_t&:= \ZO(x_t;\delta_t,v_t) - \ZOOrig(x_t;\delta_t,v_t).
\end{align*}
The term \(\mc{E}_t^{(1)}\) denotes the bias introduced due to the difference between standard zeroth-order gradient estimator, as per \eqref{eq: ZOOriginal}, and the true gradient. The term \(\mc{E}_t^{(2)}\) denotes the randomness introduced if we were to use the standard zeroth-order gradient estimator \eqref{eq: ZOOriginal}. Finally, the term \(\mc{E}_t^{(3)}\) denotes the bias introduced due to difference between our gradient estimator \eqref{eq: GradEstimator} and the standard zeroth-order gradient estimator (cf. Remark \ref{rem: Diff_Standard_ZO}).
\end{remark}

\paragraph{Proof of Theorem \ref{thm: ConvergenceStationary}}
The proof of Theorem \ref{thm: ConvergenceStationary} follows by noting that \(\tilde{f}\) approximately decreases along the trajectory \eqref{eq: UpperLevel} (Lemma \ref{lem: TildeFPotential}). Note that the decrease is said to be ``approximate'' because of the bias introduced by \eqref{eq: GradEstimator} in comparison to actual gradient \(\nabla \tilde{f}(\cdot)\). We then proceed to individually bound the bias terms (Lemma \ref{lem: ErrorBounds}). The convergence rate follows by using the step size and perturbation radius stated in the statement of Theorem \ref{thm: ConvergenceStationary}.

\textbf{Proof of Theorem \ref{thm: ConvergenceStationary}.}
From Lemma \ref{lem: TildeFPotentialAppendix} we know that \(\tilde{f}(\cdot)\) approximately decreases along the trajectory of \eqref{eq: UpperLevel}. That is, 
\begin{align}\label{eq: NowBoundError}
    &\avg\ls{\tilde{f}(x_{t+1})} \leq \avg\ls{ \tilde{f}(x_t)}-\frac{\eta_t}{2}\avg\ls{\|\nabla \tilde{f}(x_t)\|^2 } \notag \\&+ \eta_t \avg\ls{\|\mc{E}_t^{(1)}\|^2}+ \eta_t \avg\ls{\|\mc{E}_t^{(3)}\|^2}+{\tildefSmooth \eta_t^2} \avg\ls{\|\mc{E}_t^{(2)}\|^2}.
\end{align}
Using the bounds on error terms from Lemma \ref{lem: ErrorBoundsAppendix}, we obtain 
{\small \begin{align*}
     &\avg\ls{\tilde{f}(x_{t+1})} \leq \avg\ls{ \tilde{f}(x_t)}-\frac{\eta_t}{2}\avg\ls{\|\nabla \tilde{f}(x_t)\|^2 } + \eta_t \frac{\tildefSmooth^2\delta_t^2d^2}{4}\\ &+\eta_t \lr{\frac{d^2}{\delta_t^2}\fyLipschitz^2\left(2\alpha^t e_0 + 
    2Cd^2\sum_{k=0}^{t-1}\alpha^{t-k} \eta_k^2 + Cd\sum_{k=0}^{t-1}\alpha^{t-k}\delta_k^2\right)} \\&+4d^2 \tildefLipschitz^2{\tildefSmooth \eta_t^2} 
 \end{align*}}
Re-arranging the terms and adding and substracting the term \(\tilde{f}(x^\ast)=\min_{x\in X}\tilde{f}(x)\), we obtain 
 \begin{align*}
     &\frac{\eta_t}{2}\avg\ls{\|\nabla \tilde{f}(x_t)\|^2} \leq \avg\ls{\tilde{f}(x_t)}-{\tilde{f}(x^\ast)} + \eta_t\frac{\tildefSmooth^2\delta_t^2d^2}{4}\\ &\quad+  \eta_t{\frac{d^2}{\delta_t^2}\fyLipschitz^2\left(2\alpha^t e_0 + 
    2Cd^2\sum_{k=0}^{t-1}\alpha^{t-k} \eta_k^2 + C\sum_{k=0}^{t-1}\alpha^{t-k}\delta_k^2\right)} \\&\quad - \lr{ \avg\ls{\tilde{f}(x_{t+1})} - {\tilde{f}(x^\ast)}  } +4d^2 \tildefLipschitz^2{\tildefSmooth \eta_t^2}. 
 \end{align*}
 Summing the previous equation over time step \(t\) we obtain 
 \begin{align}\label{eq: SummingOver}
   & \sum_{t\in[T]}\eta_t\avg\ls{\|\nabla \tilde{f}(x_t)\|^2} \leq \lr{\tilde{f}(x_0)-\tilde{f}(x^\ast)} \notag \\&\quad  + \frac{\tildefSmooth^2d^2}{4}\sum_{t\in[T]}\eta_t\delta_t^2 + 2e_0d^2\fyLipschitz^2 \sum_{t\in[T]}\frac{\eta_t}{\delta_t^2 }\alpha^t\notag \\&\quad + \underbrace{2Cd^4\fyLipschitz^2  \sum_{t\in[T]}\frac{\eta_t}{\delta_t^2}\sum_{k=0}^{t-1}\alpha^{t-k}\eta_k^2}_{\text{Term E}} \notag \\&\quad + \underbrace{C\fyLipschitz^2d^2\sum_{t\in[T]}\frac{\eta_t}{\delta_t^2}\sum_{k=0}^{t-1}\alpha^{t-k}\delta_k^2}_{\text{Term F}}+ 4d^2 \tildefLipschitz^2{\tildefSmooth}\sum_{t\in[T]}\eta_t^2
 \end{align}

Setting \(\eta_t = \bar{\eta} (t+1)^{-1/2}d^{-1}\) and \(\delta_t =\bar{\delta} (t+1)^{-1/4}d^{-1/2}\), as per the statement of Theorem \ref{thm: ConvergenceStationary}, and dividing both sides by \(\sum_{t\in [T]}\eta_t\), we obtain 
\begin{align*}
    &\frac{1}{\sum_{t\in[T]}\eta_t}\sum_{t\in[T]}\eta_t\avg\ls{\|\nabla \tilde{f}(x_t)\|^2} \leq \frac{Cd}{\bar{\eta}\sqrt{T}}\lr{\tilde{f}(x_0)-\tilde{f}(x^\ast)} \\&\quad + \frac{C\tildefSmooth d\log(T)\bar{\delta}^2}{4\sqrt{T}} + \frac{2Cd^3\fyLipschitz^2\alpha}{(1-\alpha )\bar{\eta}\sqrt{T}}  \notag \\&\quad +\frac{1}{\sum_{t\in[T]}\eta_t}\underbrace{2d^4C\fyLipschitz^2  \sum_{t\in[T]}\frac{\eta_t}{\delta_t^2}\sum_{k=0}^{t-1}\alpha^{t-k}\eta_k^2}_{\text{Term E}} \\&\quad+ \frac{1}{\sum_{t\in[T]}\eta_t}\underbrace{\fyLipschitz^2Cd^2\sum_{t\in[T]}\frac{\eta_t}{\delta_t^2}\sum_{k=0}^{t-1}\alpha^{t-k}\delta_k^2}_{\text{Term F}}\\&\quad+ \frac{4C\tildefLipschitz^2\tildefSmooth^2\bar{\eta}\log(T) d}{\sqrt{T}},
\end{align*}
where \(C\) is a positive scalar. Next, we bound \(\text{Term E}+\text{Term F}\) as follows 
% . Let's consider the following term by defining \(C_7=2C_6\fyLipschitz^2, C_8=C_4\fyLipschitz^2\)
\begin{align}\label{eq: EPF}
    &\text{Term E } + \text{Term F} \leq  \sum_{t=1}^{T}\frac{\eta_t}{\delta_t^2} \sum_{k=0}^{t-1}\alpha^{t-k}\left(d^4\eta_k^2+d^2\delta_k^2\right) \notag \\ 
    &=\frac{\bar{\eta}}{\bar{\delta}^2}  \sum_{t=1}^{T}\sum_{k=0}^{t-1} \alpha^t \frac{\Theta_k}{\alpha^k} = \frac{\bar{\eta}}{\bar{\delta}^2} \sum_{k=0}^{T-1}\frac{\Theta_k}{\alpha^k} \sum_{t=k+1}^{T}  \alpha^t \notag  \\ 
    &\leq\frac{\bar{\eta}}{\bar{\delta}^2}  \sum_{k=0}^{T-1}\frac{\Theta_k}{\alpha^k}\frac{\alpha^{k+1}}{1-\alpha}  =\frac{\bar{\eta}}{\bar{\delta}^2}  \frac{\alpha}{1-\alpha} \sum_{k=0}^{T-1}{\Theta_k} \notag \\
    &= \frac{\bar{\eta}}{\bar{\delta}^2}  \frac{C\alpha}{1-\alpha}\lr{d^2\bar{\eta}^2\log(T)+d^2\bar{\delta}^2\sqrt{T}},
\end{align}
where in second equality \(\Theta_k := (d^4\eta_k^2+d^2\delta_k^2)\), and we have appropriately adjusted the constant \(C\) to account for additinal constants.  
Thus, combining \eqref{eq: SummingOver} and \eqref{eq: EPF}, we obtain

\begin{align*}
    &\frac{1}{\sum_{t\in[T]}\eta_t}\sum_{t\in[T]}\eta_t\avg\ls{\|\nabla \tilde{f}(x_t)\|^2} \\&\leq \mathcal{O}\bigg(\frac{d}{\sqrt{T}}\lr{\tilde{f}(x_0)-\tilde{f}(x^\ast)} + \frac{ d\log(T)}{\sqrt{T}}  \notag \\&\quad +  \frac{d^3\alpha}{(1-\alpha )\sqrt{T}} + \frac{4\log(T) d}{\sqrt{T}} \\ &\quad +\frac{d}{\sqrt{T}}  \frac{\alpha}{1-\alpha}\lr{d^2\log(T)+d^2\sqrt{T}}\bigg). 
\end{align*}

To conclude, we obtain 
\begin{align*}
    &\min_{t\in[T]}\avg\ls{\|\nabla \tilde{f}(x_t)\|^2} \leq \frac{1}{\sum_{t\in[T]}\eta_t}\sum_{t\in[T]}\eta_t\avg\ls{\|\nabla \tilde{f}(x_t)\|^2} \\&\leq \tilde{O}\lr{\frac{d}{\sqrt{T}} + \frac{\alpha}{1-\alpha}{d^3}\left(1+\frac{1}{\sqrt{T}}\right)}. 
\end{align*}
This concludes the proof. \qed

Now, we formally state the Lemmas used in the proof. 
\begin{lemma}\label{lem: TildeFPotential}
If \(\bar{\eta}\leq d/(2\tildefSmooth)\) then 
\begin{equation}
\begin{aligned}\label{eq: NowBoundErrorMP}
    &\avg\ls{\tilde{f}(x_{t+1})} \leq \avg\ls{ \tilde{f}(x_t)}-\frac{\eta_t}{2}\avg\ls{\|\nabla \tilde{f}(x_t)\|^2 } \\ & + \eta_t \avg\ls{\|\mc{E}_t^{(1)}\|^2}+ \eta_t \avg\ls{\|\mc{E}_t^{(3)}\|^2}+{\tildefSmooth \eta_t^2} \avg\ls{\|\mc{E}_t^{(2)}\|^2}.
\end{aligned}
\end{equation}
\end{lemma}
The proof of Lemma \ref{lem: TildeFPotential} follows in two steps: First, we use second-order Taylor series expansion of \(\tilde{f}\) along the iterate values. Second, we use \eqref{eq: UpperLevel} and complete the squares using algebraic manipulations. A detailed proof is provided in the Appendix. 
\begin{lemma}\label{lem: ErrorBounds}
The errors \(\avg\ls{\|\mc{E}_t^{(i)}\|^2 }\) for \(i\in \{1,2,3\}\) are bounded as follows:
\begin{align*}
    \avg\ls{\|\mc{E}_t^{(1)}\|^2}&\leq \frac{\tildefSmooth^2\delta_t^2d^2}{4},\quad 
    \avg\ls{\|\mc{E}_t^{(2)}\|^2}\leq 4d^2\tildefLipschitz^2, 
\end{align*}
\begin{equation}
    \begin{aligned}
    \avg\ls{\|\mc{E}_t^{(3)}\|^2}&\leq  \frac{d^2}{\delta_t^2}\fyLipschitz^2\bigg(2\alpha^t e_0 + 
    2C\sum_{k=0}^{t-1}\alpha^{t-k} \eta_k^2 \\&\quad + C\sum_{k=0}^{t-1}\alpha^{t-k}\delta_k^2\bigg),
    \end{aligned}
\end{equation}
where \(C\) is a scalar and \(e_0 = \|y_0^{(0)}-S(x_0)\|^2\). 
\end{lemma}

The stated bounds on \(\avg\ls{\|\mc{E}_t^{(1)}\|^2}\) and \(\avg\ls{\|\mc{E}_t^{(2)}\|^2}\) are inspired by the literature on two-point zeroth-order gradient estimators \cite{spall1997one,flaxman2004online}. We use the Lipschitz property of \(f(x,\cdot)\) to bound 
%the term \(\avg\ls{\|\mc{E}_t^{(3)}\|^2}\) is bounded as follows:  
\begin{align*}
{\|\mc{E}^{(3)}_t\|^2}
    \leq  2\frac{d^2}{\delta_t^2}\fyLipschitz^2\Big(\underbrace{\|y_{t}^{(K)}-\br(\hat{x}_t)\|^2}_{\text{Term A}}+\underbrace{\|\tilde{y}_t^{(K)}-\br(x_t)\|^2}_{\text{Term B}}\Big). 
\end{align*}
Following which, Term A and Term B are recursively bounded. 

A detailed proof is provided in the Appendix.

\subsection{Non-convergence to saddle points}
In this section, we show that the updates in \eqref{eq: UpperLevel} does not converge to a saddle point. Towards that goal, we make the following assumption that posits that the function \(\tilde{f}(\cdot)\) satisfy the strict saddle property.

\begin{assumption}\label{assm: StrictSaddle}
For any saddle point \(x^\ast\) of \(\tilde{f}\), it holds that \(\lambda_{\min}(\nabla^2\tilde{f}(x^\ast)) < 0\).
\end{assumption}
In the following theorem, we formally state the non-convergence result.
\begin{theorem}\label{thm: NonconvergenceSaddle}
Let Assumption \ref{assm: BasicAssumptionSetup}-\ref{assm: StrictSaddle} hold. For \(\epsilon>0\) there exists a time \(T_{\epsilon}\) such that for any saddle point \(x^\ast\) of \(\tilde{f}\) it holds that \(
    \avg\ls{\|x_t-x_\ast \|^2} \geq \epsilon,  \forall \ t\geq T_{\epsilon}.
\)
\end{theorem}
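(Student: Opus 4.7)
The plan is to argue by contradiction using the classical stochastic-approximation non-convergence machinery (Pemantle, 1990; Brandière--Duflo, 1996) adapted to the biased zeroth-order setting of \eqref{eq: UpperLevel}. Fix a saddle $x^\ast$ and, by Assumption \ref{assm: StrictSaddle}, a unit eigenvector $\bar v$ of $\nabla^2\tilde f(x^\ast)$ with eigenvalue $-\mu<0$. Suppose for contradiction that $\avg\ls{\|x_t-x^\ast\|^2}\to 0$. Localizing to a small ball $B_r(x^\ast)$ on which $\nabla\tilde f(x) = \nabla^2\tilde f(x^\ast)(x-x^\ast) + O(\|x-x^\ast\|^2)$, I would project the upper-level update onto $\bar v$ to obtain
\[
u_{t+1} = (1+\eta_t\mu)\,u_t - \eta_t\langle \mc{E}_t^{(2)},\bar v\rangle - \eta_t\langle \mc{E}_t^{(1)}+\mc{E}_t^{(3)},\bar v\rangle + O\bigl(\eta_t\|x_t-x^\ast\|^2\bigr),
\]
where $u_t := \langle x_t-x^\ast,\bar v\rangle$ and the errors are decomposed as in Lemma \ref{lem: ErrorBounds}.

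Squaring this recursion and taking conditional expectation, the cross term with the mean-zero noise $\mc{E}_t^{(2)}$ vanishes (since $v_t$ is independent of the past), producing
\[
\avg\ls{u_{t+1}^2} \geq (1+\eta_t\mu)^2\avg\ls{u_t^2} + \eta_t^2\sigma_t^2 - C\eta_t^2\bigl(\tildefSmooth^2\delta_t^2 d^2 + \fyLipschitz^2 \alpha^t d^2/\delta_t^2\bigr),
\]
with $\sigma_t^2 := \avg\ls{\langle \mc{E}_t^{(2)},\bar v\rangle^2}$. The critical structural ingredient is a directional \emph{lower} bound on the two-point variance: using the uniform distribution of $v_t$ on the sphere and expanding $f(\hat x, y^{(K)}(\hat x))-f(x,y^{(K)}(x))$ to second order in $\delta_t$, one shows $\sigma_t^2\geq c\,\delta_t^2$ for a constant $c>0$ set by $\bar v^\top\nabla^2\tilde f(x^\ast)\bar v\neq 0$, uniformly for $x_t\in B_r(x^\ast)$. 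Unrolling now yields
\[
\avg\ls{u_T^2} \geq \sum_{k=T_0}^{T-1}\Bigl(\prod_{j=k+1}^{T-1}(1+\eta_j\mu)^2\Bigr)\bigl(c\eta_k^2\delta_k^2 - \text{bias}_k\bigr),
\]
and since $\sum_k \eta_k=\infty$ under the schedule $\eta_t\asymp t^{-1/2}$, $\delta_t\asymp t^{-1/4}$, the amplification factor grows without bound, forcing $\avg\ls{u_T^2}\to\infty$ and contradicting the localization assumption. A standard truncation argument over the events $\{x_t\in B_r(x^\ast)\}$ then produces the required $T_\epsilon$.

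The main obstacle is twofold: (i) establishing the directional variance lower bound $\sigma_t^2\geq c\delta_t^2$ uniformly in a neighborhood of $x^\ast$ rather than only at the saddle itself, which requires that $\bar v^\top\nabla^2\tilde f(x)\bar v$ stays bounded away from zero (true by continuity and Assumption \ref{assm: StrictSaddle}); and (ii) controlling the accumulated lower-level bias $\mc{E}_t^{(3)}$, which by Lemma \ref{lem: ErrorBounds} contributes a term of order $\alpha^K d^2/\delta_t^2$ at each step. Both are handled by the same two-timescale scheduling that underlies Theorem \ref{thm: ConvergenceStationary}: choosing $K$ large enough (in the regime of Corollaries \ref{cor: Conv}--\ref{cor: ConvExp}) makes $\alpha^K$ decay strictly faster than the exponential amplification $\prod(1+\eta_k\mu)^2$ grows, so that the noise term $c\eta_k^2\delta_k^2$ dominates and drives the iterates away from $x^\ast$.
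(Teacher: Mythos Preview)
Your Pemantle/Brandi\`ere--Duflo strategy is a natural alternative, but it is structurally different from what the paper does, and as written it has a genuine gap.

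The paper never analyzes the directional variance of $\mc{E}_t^{(2)}$. Instead it introduces a deterministic gradient-descent \emph{shadow trajectory} $z_{s+1}(\hat x_t)=z_s(\hat x_t)-\eta_{t+s}\nabla\tilde f(z_s(\hat x_t))$ initialized at the randomly perturbed point $z_0(\hat x_t)=\hat x_t=x_t+\delta_t v_t$, proves in Lemma~\ref{lem: PseudoTrajectory} that $\sup_{s\le S}\avg\ls{\|x_{t+s}-z_s(\hat x_t)\|^2}\to 0$ for every fixed horizon $S$, and then invokes the result of Lee et al.\ that gradient descent from a random initialization avoids strict saddles. If $\avg\ls{\|x_t-x^\ast\|^2}\to 0$, the shadow $z_s(\hat x_t)$ would also be trapped near $x^\ast$ for arbitrarily long horizons, which is impossible. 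In other words, the randomness in $v_t$ is exploited as a random \emph{initial condition} for deterministic GD, not as excitation noise driving an unstable linear recursion; this completely sidesteps any bias--noise accounting.

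The gap in your outline is the squared recursion you display. After squaring $u_{t+1}=(1+\eta_t\mu)u_t-\eta_t\langle\mc E_t^{(2)},\bar v\rangle-\eta_t b_t$ and taking conditional expectation, only the cross term with $\mc E_t^{(2)}$ vanishes; the bias cross term $-2(1+\eta_t\mu)\eta_t u_t b_t$ survives and, after Young's inequality, contributes $-C\eta_t b_t^2/\mu$, not $-C\eta_t^2 b_t^2$ as you write. Under the paper's hypotheses the only available bound is $|b_t|\le\|\mc E_t^{(1)}\|=O(\delta_t d)$ from Lemma~\ref{lem: ErrorBounds}, so the bias contribution is $\asymp\eta_t\delta_t^2$ while your noise injection is $\eta_t^2\sigma_t^2\asymp\eta_t^2\delta_t^2$; since $\eta_t\to 0$ the bias term is larger, and after unrolling both sums scale like $\exp(c\sqrt T)$ with constants you cannot order in your favor, so the blow-up $\avg\ls{u_T^2}\to\infty$ does not follow. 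Symmetry of the sphere plus continuity of $\nabla^2\tilde f$ would improve $\mc E_t^{(1)}$ to $o(\delta_t)$, but without a Lipschitz-Hessian assumption you cannot reach the quantitative rate $b_t=o(\eta_t^{1/2}\delta_t)$ that the comparison would require. The paper's pseudo-trajectory argument avoids this competition entirely.
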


To prove Theorem \ref{thm: NonconvergenceSaddle},  an asymptotic pseudo-trajectory of \eqref{eq: UpperLevel} is constructed. We then show that the asymptotic pseudo-trajectory almost surely avoids saddle point.

\paragraph{Proof of Theorem \ref{thm: NonconvergenceSaddle}}
The proof follows by contradiction.
Suppose there exists a saddle point \(x^\ast\) such that 
\begin{align*}
\lim_{t\ra\infty}    \avg\ls{\|x_{t}-x^\ast\|^2} = 0. 
\end{align*}
This implies that for any \(\epsilon>0\) there exists an integer \(T_{\epsilon}\) such that for all \(t\geq T_{\epsilon}\) it holds that 
\begin{align}\label{eq: Th2Pf1}
    \avg\ls{\|x_{t+s}-x^\ast\|^2} \leq \epsilon/4\quad \forall s\geq 0. 
\end{align}

Next, for any arbitrary point \(x_t\) along the trajectory \eqref{eq: UpperLevel}, we define a dynamics parametrized by \(\hat{x}_t=x_t+\delta_tv_t\), as follows  
\begin{align*}
     z_{s+1}(\hat{x}_t) := z_s(\hat{x}_t) - \eta_{t+s} \nabla \tilde{f}(z_s(\hat{x}_t)), 
\end{align*}
where \(z_0(\hat{x}_t) = \hat{x}_t\). From Lemma \ref{lem: PseudoTrajectory},  we know that for any \(\epsilon>0\) and positive integer \(S\) there exists \(\tilde{T}_{\epsilon,S}\) such that 
\begin{align}\label{eq: Th2Pf2}
    \sup_{s\in [0,S]}\avg\ls{\| z_{s}(\hat{x}_t) - x_{t+s}\|^2} \leq \epsilon/4 \quad \forall \ t\geq \tilde{T}_{\epsilon,S}. 
\end{align}
Next, note that 
\begin{align*}
    \|z_s(\hat{x}_t)-x^\ast\|^2 &\leq 2\|z_s(\hat{x}_t)-x_{t+s}\|^2 + 2\|x_{t+s}-x^\ast\|^2.  
\end{align*} 
Therefore, combining \eqref{eq: Th2Pf1}-\eqref{eq: Th2Pf2}, we observe that for every \(t\geq \max\{T_{\epsilon},T_{\epsilon,S}\}\)
\begin{align*}
    \|z_s(\hat{x}_t)-x^\ast\|^2 &\leq \epsilon, \quad \forall \ s\in [0,S]
\end{align*}

But from \cite{lee2016gradient} we know that  for gradient descent with random initialization almost surely avoids converging to saddle point \footnote{More specifically, we use the results from \cite[Proposition 8]{lee2016gradient}. Even though the results in \cite[Proposition 8]{lee2016gradient} hold for gradient descent update with constant step-size, we can use this result for decaying step size in our context as well. This is because the proof of \cite[Proposition 8]{lee2016gradient} only requires each step of the gradient update to be diffeomorphism, which holds in our setting as the step-sizes are always non-negative and decaying.} there exists \(S_{\epsilon}\) such that for all \(s\geq S_{\epsilon}\) it holds that 
\begin{align*}
    \|z_{s}(\hat{x}_t)-x^\ast\|^2 \geq 2\epsilon.
\end{align*}
 This establishes contradiction.

The following lemma prescribes the pseudo-trajectory. 
\begin{lemma}\label{lem: PseudoTrajectory} Let \(x_t\) be an arbitrary point along the trajectory \eqref{eq: UpperLevel}.  Define a dynamics parametrized by \(\hat{x}_t=x_t+\delta_tv_t\), such that  \(
    z_{s+1}(\hat{x}_t) := z_s(\hat{x}_t) - \eta_{t+s} \nabla \tilde{f}(z_s(\hat{x}_t)), 
\)
where \(z_0(\hat{x}_t) = \hat{x}_t\) and it holds that  for any positive integer \(L\), we have
\begin{align*}
    \lim_{t\rightarrow \infty} \sup_{s\in [0,L]} \avg\ls{\|x_{t+s}-z_s(\hat{x}_t)\|^2} = 0.
\end{align*}
\end{lemma}
A detailed proof of Lemma \ref{lem: PseudoTrajectory} is provided in the Appendix.

\section{NUMERICAL EXPERIMENTS}\label{sec: Numerics}
We numerically study the Algorithm \ref{alg: ZerothOrderTwoPointAlgorithm} in the context of incentive design in routing games (described in Section \ref{sec: MotivatingExample}). We consider the Sioux Falls transportation network, as depicted in Figure \ref{fig:SiouxFalls}(a). The latency function and network topology are inherited from \url{http://tinyurl.com/y4fm4nvt}. We consider a synthetic demand of \((1,2,3,2,2,1)\) units, respectively, between o-d pairs \(\mathcal{Z} = ((1,20), (13,2), (20,1), (10,13), (11,20), (4,21))\). 

\begin{figure}
    \centering
    \includegraphics[width=0.45\textwidth]{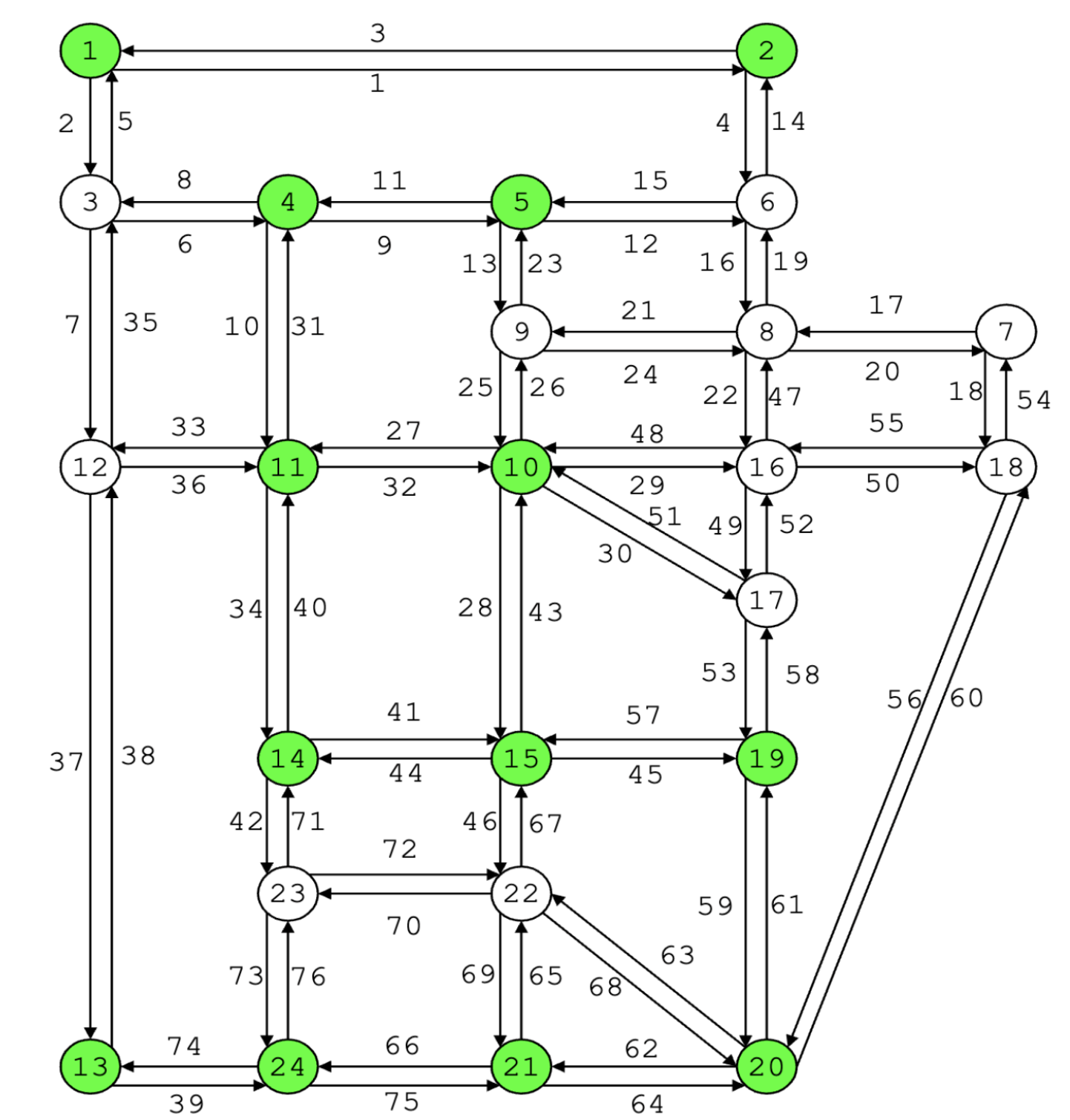}
    \caption{Schematic depiction of Sioux Falls transportation network network. The numbers on the edges and nodes are identifiers.}
    \label{fig:enter-label}
\end{figure}
\begin{figure}
    \centering
\includegraphics[width=0.45\textwidth]{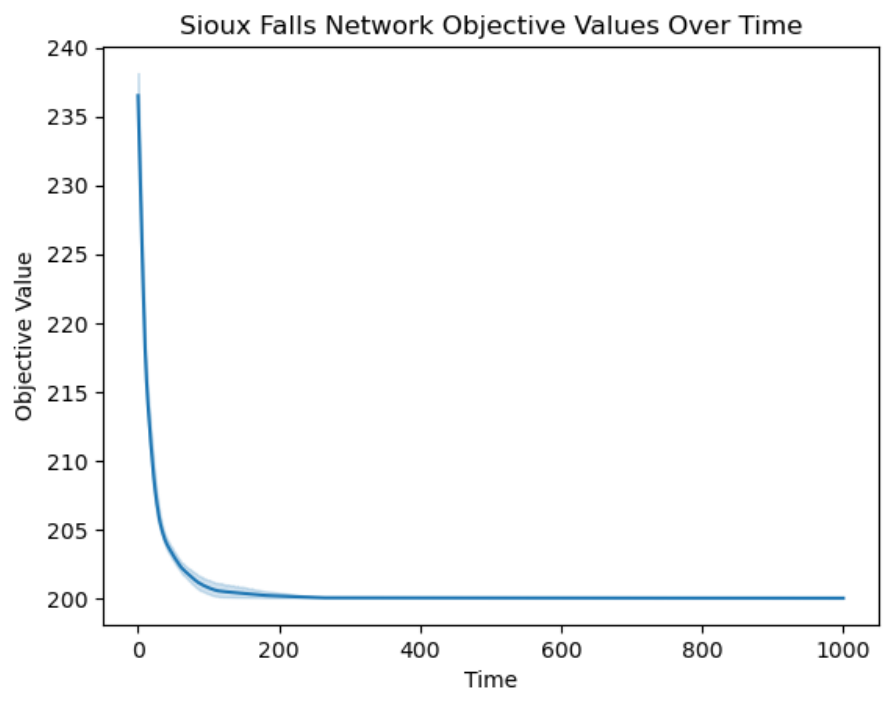}
    \caption{The evolution of planners objective function with iterates of the algorithm. The shaded blue region denotes the confidence interval calculated over 12 runs.}
    \label{fig:SiouxFalls}
\end{figure}
The incentive designer can set tolls on each edge of the network. In response, unknown to the planner, the travelers alter their route selection as per a gradient rule. More formally, given a toll \(x\in\mathbb{R}^{|\mc{E}|}\), we consider that the route choices made by the travelers are updated by descending along the gradient of the potential function \(\Phi(\cdot,x)\) (cf. \eqref{eq: WE}). Note that, for any \(x\in \mb{R}^{|\mc{E}|}, z\in \mathcal{Z}, r\in \mathcal{R}_z\), the gradient is 
\begin{align*}
    &\frac{\partial \Phi(y,x)}{\partial y_{rz}} = \sum_{e\in \mc{E}}\ell_e(w_e(y))\frac{\partial w_e(y)}{\partial y_{rz}} + x_e \frac{\partial w_e(y)}{\partial y_{rz}} \\ 
    &\underset{(i)}{=} \sum_{e\in \mc{E}}\ell_e(w_e(y))\mathbf{1}(e\in r) + x_e \mathbf{1}(e\in r) \underset{(ii)}{=}c_r(y,x)
\end{align*}
where \((i)\) is due to \eqref{eq: EdgeCongestion} and \((ii)\) follows from \eqref{eq: route_cost}. Consequently, the gradient update takes the following form: for every \(z\in \mathcal{Z}\): 
\begin{align}\label{eq: H_k_traffic}
    y_z^{(k+1)} = \mathcal{P}_{Y_z}\left((y_{rz}^{(k)} -  \gamma c_r(q^k, p))_{r\in \mc{R}_z}\right).
\end{align}

We simulate \(12\) runs of Algorithm \ref{alg: ZerothOrderTwoPointAlgorithm} with \(H_k\) given in \eqref{eq: H_k_traffic}. We set \(T = 1000\) and \(K = 3\). The initial route flow vector \(y_0^{(0)}\) and \(\tilde{y}_0^{(0)}\) are randomly initialized. We set initial tolls uniformly randomly between \([0,0.1]\). We set the step size \(\eta_t = 6(t+1)^{-1/2}\), \(\delta_t = 0.3 \cdot (t+1)^{-1/4}\), \(\gamma = 0.005\) and \(\lambda = 0.01\). 

In Figure \ref{fig:SiouxFalls}(b), we show the leader's objective value as function of time iterates \(t \in [T]\). 
We observe that all trajectories converge to same objective value even with random initializations. This shows that the convergent point is perhaps a global optimizer.

\section{CONCLUSIONS}\label{sec: Conclusions}
We propose an efficient algorithm for Stackelberg games which converges to a stationary point at a rate of \(\mathcal{O}(T^{-1/2})\) and asymptotically reaches a local Stackelberg equilibrium. The algorithm is designed so that the leader does not need to know any information about the game structure at lower-level and updates its strategies by only querying for the followers response to its strategy. However, in this work we assume that follower's equilibrium strategy is singleton, Lipschitz and the leader's hyper-objective is differentiable. An interesting direction of future work is to relax this assumption. 

\bibliography{refs}
\bibliographystyle{alpha}

\appendix

\section{Technical Results}
\paragraph{Properties of zeroth-order gradient estimator.}
In this section, we state relevant properties of the zeroth-order gradient estimator used in the proposed algorithm.
Define the $\queryRadius$-smoothed loss function
$\tilde{f}_\queryRadius: \R^d \ra \R$ by $\tilde{f}_\queryRadius(x) := \E_{\overline{v} \sim \textsf{Unif}(B^d)}[\tilde{f}(x + \queryRadius \overline{v})]$,
where
% $\mc{S}^{d-1}$ denotes the
% ($d-1$)-dimensional unit sphere in $\R^d$, 
$\mc{B}(\R^d)$ denotes the $d$-dimensional unit open ball in
$\R^d$. We further define $\queryRadius \cdot \mc{S}(\R^{d}) := \{\queryRadius v: v \in \mc{S}(\R^d) \}$ and $\queryRadius \cdot \mc{B}(\R^d) := \{\queryRadius \overline{v}: \overline{v} \in \mc{B}(\R^d) \}$. Finally, we use $\textsf{vol}_d(\cdot)$ to denote the volume of a set in $d$ dimensions.

\begin{lemma}[\cite{spall1997one,flaxman2004online,maheshwari2022zeroth}
]\label{Prop: App, ZO grad mean, variance}
% [\textbf{\cite{Bravo2018BanditLearning}, Lemma C.1}]  
% \label{Prop: ZO grad mean, variance}
Let $\tilde{F}(x;\delta,v) = \frac{d}{\queryRadius} \cdot \left(\tilde{f}(\hat{x})-\tilde{f}(x)\right) v$ where \(\hat{x}=x+\delta v\) and \(F(x)=\nabla \tilde{f}_{\delta}(x)\). Then the following holds: 
\begin{align} 
\label{Eqn: Prop, ZO grad mean, equality}
    \E_{v \sim \textsf{Unif}(\mc{S}(\R^{d}))} \big[\tilde{F}(x; \queryRadius, v) \big] &= \nabla \tilde{f}_\queryRadius(x). 
    % \\ 
    % \label{Eqn: Prop, ZO grad, bound from smoothed version}
    % \Vert \nabla \tilde{f}(x) - F(x) \Vert_2 &\leq \ell \queryRadius, \\ 
    % \label{Eqn: Prop, ZO grad, bound}
    % \Vert \hat{F}(x; \queryRadius, v) \Vert_2 &\leq dG + \frac{dM_L}{\queryRadius}, \\
    % \label{Eqn: Prop, ZO and true grad, difference}
    % \Vert \hat{F}(u; \queryRadius, v) - F(u) \Vert &\leq \min\Bigg\{ (d+1) G + \frac{dM_L}{\queryRadius}, \ell \queryRadius + 2dG + \frac{2dM_L}{\queryRadius} \Bigg\}.
\end{align}
% where $M_L := \sup_{(x, y) \in \X \times \Y} |L(x, y)|$. (Note that, since $\X, \Y$ are compact, $M_L < \infty$.)
\end{lemma}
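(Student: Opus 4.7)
The plan is to reduce the claim to the classical Stokes/divergence-theorem identity for the spherical smoothing estimator, in the style of Flaxman--Kalai--McMahan. First I would split the estimator into two pieces by linearity of expectation:
\begin{align*}
\E_{v \sim \textsf{Unif}(\mc{S}(\R^d))}[\tilde{F}(x;\delta,v)] = \frac{d}{\delta}\E_v[\tilde{f}(x+\delta v) v] - \frac{d}{\delta}\tilde{f}(x)\E_v[v].
\end{align*}
The second term vanishes: since the uniform distribution on the sphere $\mc{S}(\R^d)$ is invariant under $v \mapsto -v$, we have $\E_v[v]=0$. Thus only the first term remains, and it suffices to show $\frac{d}{\delta}\E_v[\tilde{f}(x+\delta v)v] = \nabla \tilde{f}_\delta(x)$.

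Next I would rewrite $\tilde{f}_\delta$ as a ball integral, $\tilde{f}_\delta(x) = \frac{1}{\textsf{vol}_d(\mc{B}(\R^d))}\int_{\mc{B}(\R^d)} \tilde{f}(x+\delta \bar v)\, d\bar v$, and differentiate under the integral sign (justified by smoothness of $\tilde{f}$ and compactness of $\mc{B}(\R^d)$). Applying the divergence theorem to the vector field $\bar v \mapsto \tilde{f}(x+\delta \bar v) e_i$ on $\mc{B}(\R^d)$ for each coordinate $i$, and using that $\nabla_{\bar v} \tilde{f}(x+\delta \bar v) = \delta \nabla \tilde{f}(x+\delta \bar v)$, yields the identity
\begin{align*}
\delta \int_{\mc{B}(\R^d)} \nabla \tilde{f}(x+\delta \bar v)\, d\bar v = \int_{\mc{S}(\R^d)} \tilde{f}(x+\delta v)\, v\, dS(v),
\end{align*}
where $v$ serves as the outward unit normal on the unit sphere.

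Finally I would convert the surface integral back into a uniform expectation on $\mc{S}(\R^d)$ and combine with the standard identity $\textsf{vol}_{d-1}(\mc{S}(\R^d))/\textsf{vol}_d(\mc{B}(\R^d)) = d$. Assembling the pieces gives
\begin{align*}
\nabla \tilde{f}_\delta(x) = \frac{1}{\delta\, \textsf{vol}_d(\mc{B}(\R^d))}\int_{\mc{S}(\R^d)} \tilde{f}(x+\delta v)\, v\, dS(v) = \frac{d}{\delta}\E_{v \sim \textsf{Unif}(\mc{S}(\R^d))}[\tilde{f}(x+\delta v)\, v],
\end{align*}
which together with the vanishing of the centering term established in the first step proves the claim.

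The main obstacle, if any, is purely bookkeeping: tracking the correct dimensional constants between the sphere and the ball (so that the factor of $d$ appears exactly), and verifying that the differentiation under the integral sign and the application of the divergence theorem are legitimate under the assumed regularity of $\tilde{f}$ (twice continuous differentiability from Assumption \ref{assm: BasicAssumptionSetup} suffices). There is no substantive difficulty beyond correctly invoking these classical calculus identities.
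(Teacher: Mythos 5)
Your proposal is correct and follows essentially the same route as the paper's proof: both hinge on the divergence/Stokes identity converting the gradient of the ball average into a surface integral over the sphere, combined with the surface-area-to-volume ratio $d/\delta$ (your version works on the unit ball with the $\delta$ absorbed by the chain rule, the paper rescales to the $\delta$-ball, which is only a cosmetic difference). You also make explicit the symmetry argument $\E_v[v]=0$ that the paper uses implicitly, which is a small improvement in rigor but not a different approach.
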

\textit{Proof.}
 Observe that  $\tilde{f}_\queryRadius(x) = \avg_{v \sim \textsf{Unif}(B^d)}[\tilde{f}(\hat{x})]$ 
 where \(\hat{x}=x+\delta v\) and
 $\tilde{F}(x; \queryRadius, v) = \frac{d}{\queryRadius} \cdot \left(\tilde{f}(x + \queryRadius v)-\tilde{f}(x)\right) v$ for each $x \in \R^d$, $\queryRadius > 0$, and $v \in \mc{S}(\R^d)$. We compute
\begin{align} \nonumber
    \nabla \tilde{f}_{\delta} (x) &= \nabla \E_{\overline{v} \sim \textsf{Unif}(\mc{B}(\R^d))} \big[ \tilde{f}(x + \queryRadius \overline{v}) \big] \\ \nonumber
    &= \nabla \E_{\overline{v} \sim \textsf{Unif}(\queryRadius \cdot \mc{B}(\R^d))} \big[ \tilde{f}(x + \overline{v}) \big] \\ \nonumber
    &= \frac{1}{\vol_d(\queryRadius \cdot \mc{B}(\R^d))} \cdot \nabla \left( \int_{\queryRadius \cdot B^d} \tilde{f}(x + \overline{v}) \hspace{0.5mm} d\overline{v} \right) \\ \label{Eqn: Stokes' Theorem, Application} 
    &= \frac{1}{\vol_d(\queryRadius \cdot \mc{B}(\R^d))} \cdot \int_{\queryRadius \cdot \mc{S}^{d-1}} \tilde{f}(x + v) \cdot \frac{v}{\Vert v \Vert_2} \hspace{0.5mm} dv, 
\end{align}
where \eqref{Eqn: Stokes' Theorem, Application} follows by Stokes' Theorem  \cite{Lee2006IntroductionToSmoothManifolds} which implies that 
\begin{align*}
    \nabla \int_{\queryRadius \cdot \mc{B}(\R^d)} \tilde{f}(x + \overline{v}) \hspace{0.5mm} d\overline{v} = \int_{\queryRadius \cdot \mc{S}(\R^d)} \tilde{f}(x + v) \cdot \frac{v}{\Vert v \Vert_2} \hspace{0.5mm} dv.
\end{align*}

Next note that 
\begin{align*}
     \E_{v \sim \textsf{Unif}(\mc{S}^{d-1})} \big[ \tilde{F}(x; \queryRadius, v) \big] &= \frac{d}{\queryRadius} \cdot \E_{v \sim \textsf{Unif}(\mc{S}(\R^{d}))} \ls{ \lr{\tilde{f}(x + \queryRadius v)-\tilde{f}(x)} v } \\ \nonumber
    &= \frac{d}{\queryRadius} \cdot \E_{v' \sim \textsf{Unif}(\queryRadius \cdot \mc{S}(\R^d))} \Bigg[ \tilde{f}(x + v') \cdot \frac{v'}{\Vert v' \Vert_2} \Bigg] \\ \nonumber
    &= \frac{d}{\queryRadius} \cdot \frac{1}{\vol_{d-1}(\queryRadius \cdot \mc{S}(\R^d))} \cdot \int_{\queryRadius \cdot \mc{S}(\R^d)} \tilde{f}(x+ v) \cdot \frac{v}{\Vert v \Vert_2} \hspace{0.5mm} dv,
\end{align*}

The equality \eqref{Eqn: Prop, ZO grad mean, equality} now follows by observing that
\begin{align*}
    \frac{\vol_{d-1}(\queryRadius \cdot \mc{S}(\R^d))}{\vol_{d}(\queryRadius \cdot \mc{B}(\R^d))} = \frac{d}{\delta}.
\end{align*}
That is, surface-area-to-volume ratio of sphere in \(\R^d\) or radius \(\delta\) is $d/\queryRadius$. 
This concludes the proof.

\paragraph{Discrete-Gronwall Inequality.}
\begin{lemma}[\cite{borkar2009stochastic}]\label{lem: DiscreteGronwall}
For a  non-negative sequence \((s_n)\)  such that 
\begin{align}\label{eq: GIDyn}
    s_{n+1}\leq {C}+L\lr{\sum_{m=0}^{n}s_m }, 
\end{align}
the following inequality holds: $s_{n+1} \leq \tilde{C}\exp(Ln)$ where \(\tilde{C} = \max\{s_0,C\}\).
\end{lemma}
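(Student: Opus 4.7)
The plan is to treat the hypothesis as a discrete analogue of the continuous integral inequality $s(t)\le C+L\int_0^t s(\tau)\,d\tau$, whose solution is $s(t)\le Ce^{Lt}$, and to mimic the continuous proof by a direct induction on the partial sums. This is the standard route for discrete Gronwall-type estimates and sidesteps any need for a clever ansatz.

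First, I would introduce the partial sum $T_n:=\sum_{m=0}^{n}s_m$ with $T_0=s_0\le\tilde{C}$. Adding $s_{n+1}$ to both sides of the hypothesis $s_{n+1}\le C+LT_n$ yields the scalar linear first-order recursion
\begin{equation*}
T_{n+1}\;\le\;(1+L)\,T_n+C.
\end{equation*}
This recursion is the entire engine of the argument; everything afterward is bookkeeping.

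Second, I would unroll the recursion in closed form: by induction on $n$,
\begin{equation*}
T_n\;\le\;(1+L)^n s_0+C\sum_{k=0}^{n-1}(1+L)^k\;=\;(1+L)^n s_0+\frac{C\bigl((1+L)^n-1\bigr)}{L}.
\end{equation*}
Plugging this back into the hypothesis and simplifying gives $s_{n+1}\le C+LT_n\le (1+L)^n(Ls_0+C)$. Using $s_0\le\tilde{C}$ and $C\le\tilde{C}$ bounds the bracket by $(1+L)\tilde{C}$, so $s_{n+1}\le\tilde{C}(1+L)^{n+1}$. Finally the elementary inequality $1+L\le e^L$ converts this polynomial bound into the claimed exponential form.

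There is no genuine obstacle; the only thing to watch is the constants. My derivation actually produces $s_{n+1}\le\tilde{C}\exp(L(n+1))$ rather than the stated $\tilde{C}\exp(Ln)$, a harmless factor of $e^L$ that can be absorbed either by redefining $\tilde{C}$ to include this factor or by treating the base case $n=0$ separately and only invoking the recursion for $n\ge1$. Since the lemma is used downstream only to bound error terms up to constants, this cosmetic mismatch is irrelevant. Overall this is a two- or three-line argument once the partial-sum reformulation is in place.
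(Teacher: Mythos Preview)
Your proposal is correct and follows essentially the same route as the paper: introduce the partial sums, derive the linear recursion $T_{n+1}\le(1+L)T_n+\tilde{C}$, unroll it, and substitute back into the hypothesis; the paper bounds the resulting geometric sum via an integral whereas you evaluate it in closed form, but this is cosmetic. Both arguments in fact land on $s_{n+1}\le\tilde{C}\exp(L(n+1))$, so the off-by-$e^L$ discrepancy you flagged is present in the paper's own proof as well.
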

\textit{Proof.}
The proof is taken from \cite{borkar2009stochastic}. We provide details here for the sake of completeness.

Define \(S_n = \sum_{m=0}^{n}s_m\). Then \eqref{eq: GIDyn} can be equivalently written as 
\begin{align*}
    S_{n+1}-S_n \leq \tilde{C}+LS_n
\end{align*}
where \(\tilde{C} = \max\{s_0,C\}\). 
Thus, it follows that 
\begin{align*}
    S_{n+1} &\leq  (1+L)S_n + \tilde{C}\\
    &\leq (1+L)^{n+1}S_0 + \sum_{k=0}^{n}\tilde{C}(1+L)^{n-k} \\ 
    &\leq \tilde{C}\sum_{k=0}^{n+1}(1+L)^{n+1-k} \\ 
    &\leq \tilde{C}\sum_{k=0}^{n+1}\exp(L(n+1-k)) \\ 
    &\leq \tilde{C}\int_{0}^{n+1}\exp(L(n+1-\tau))d\tau \\
     &= \frac{\tilde{C}}{L}\lr{\exp(L(n+1))-1}
\end{align*}
Substituting this in \eqref{eq: GIDyn} we obtain 
\begin{align*}
    x_{n+1}\leq \tilde{C} + LS_n \leq \tilde{C} + \tilde{C} \lr{\exp(L(n+1))-1} = \tilde{C}\exp(L(n+1))
\end{align*}
This concludes the proof.

% \begin{lemma}\label{lem: GenProjOperator}
% The generalized gradient estimator has the following properties 
% \begin{itemize}
%     \item[(i)] The generalized projection gradient is Lipschitz continuous in terms of gradient estimator. That is,
%     \begin{align*}
%         \|\Pi_X(x;\eta,g_1)-\Pi_X(x;\eta,g_2)\| \leq \|g_1-g_2\|
%     \end{align*}
%     \item[(ii)] Descent lemma 
%     \begin{align*}
%         \lara{g,\Pi_X(x,\eta,g)} \geq \|\Pi_X(x,\eta,g)\|^2
% s    \end{align*}
% \end{itemize}
% \end{lemma}
% \begin{proof}
% \begin{itemize}
%     \item[(i)] We first show that the generalized projection gradient is non-expansive in terms of the gradient estimator. Note that 
%     \begin{align*}
%         \|\Pi_X(x;\eta,g_1)-\Pi_X(x;\eta,g_2)\| &= \frac{1}{\eta}\left( x-\proj_X(x-\eta g_1) - (x-\proj_X(x-\eta g_2)) \right) \\ 
%         &= \frac{1}{\eta}\left(\proj_X(x-\eta g_2)- \proj_X(x-\eta g_1)\right) \\ 
%         &\leq \|g_1-g_2\| 
%     \end{align*}
%     where the last inequality holds due to the non-expansiveness of projection operator. 
%     \item[(ii)] From the first order condition for optimality of projection we know that 
%     \begin{align*}
%         \lara{ \proj_X(x;\eta,g)-(x-\eta g ), \bar{x}-\proj_X(x;\eta,g)} \geq 0 \quad \forall \bar{x}\in X
%     \end{align*}
%     Pick \(\bar{x} = x\). Then it follows that 
%     \begin{align*}
%       \eta\lara{g,x-\proj_X(x;\eta,g)} \geq \|x-\proj_X(x;\eta,g)\|^2 
%     \end{align*}
%     The proof follows by using the definition of generalized projection gradient as per \eqref{eq: GPG}. 
% \end{itemize}
% \end{proof}
\section{Proof of Lemmas in Main Paper}
\setcounter{lemma}{0}
\renewcommand{\thelemma}{\arabic{lemma}}

\begin{lemma}[Restated]\label{lem: TildeFPotentialAppendix}
If \(\bar{\eta}\leq d/(2\tildefSmooth)\) then 
\begin{align}
    \avg\ls{\tilde{f}(x_{t+1})} \leq \avg\ls{ \tilde{f}(x_t)}-\frac{\eta_t}{2}\avg\ls{\|\nabla \tilde{f}(x_t)\|^2 }\notag+ \eta_t \avg\ls{\|\mc{E}_t^{(1)}\|^2}+ \eta_t \avg\ls{\|\mc{E}_t^{(3)}\|^2}+{\tildefSmooth \eta_t^2} \avg\ls{\|\mc{E}_t^{(2)}\|^2} 
\end{align}
\end{lemma}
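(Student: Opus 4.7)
}
The plan is to combine the descent inequality from $\tildefSmooth$-smoothness of $\tilde{f}$ (Assumption \ref{assm: BasicAssumptionSetup}(3)) with the three-part decomposition $\ZO(x_t;\delta_t,v_t) = \nabla\tilde{f}(x_t) + \mc{E}_t^{(1)} + \mc{E}_t^{(2)} + \mc{E}_t^{(3)}$ introduced in the proof sketch of Theorem \ref{thm: ConvergenceStationary}, and then to apply Young's inequality together with the step-size condition $\bar{\eta}\leq d/(2\tildefSmooth)$ to collect terms in the stated form.

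First I would invoke smoothness and substitute \eqref{eq: UpperLevel} to obtain
\begin{align*}
\tilde{f}(x_{t+1}) \leq \tilde{f}(x_t) - \eta_t \langle \nabla\tilde{f}(x_t), \ZO(x_t;\delta_t,v_t) \rangle + \frac{\tildefSmooth \eta_t^2}{2}\|\ZO(x_t;\delta_t,v_t)\|^2.
\end{align*}
Plugging the decomposition into both the inner product and the squared norm, and taking conditional expectation with respect to the sigma-algebra generated by the history through time $t$, the cross terms that pair $\mc{E}_t^{(2)}$ with an $x_t$-measurable component are annihilated by $\avg\ls{\mc{E}_t^{(2)}|x_t}=0$, which holds by the very definition of $\mc{E}_t^{(2)}$. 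Young's inequality, applied with parameter $1/2$ separately to each of the remaining cross terms $-\eta_t\langle \nabla\tilde{f}(x_t), \mc{E}_t^{(1)}\rangle$ and $-\eta_t\langle \nabla\tilde{f}(x_t), \avg\ls{\mc{E}_t^{(3)}|x_t}\rangle$, combined with Jensen's inequality $\|\avg\ls{\mc{E}_t^{(3)}|x_t}\|^2 \leq \avg\ls{\|\mc{E}_t^{(3)}\|^2|x_t}$, costs at most $\eta_t/4$ of the $\|\nabla\tilde{f}(x_t)\|^2$ budget on each term while producing $\eta_t\|\mc{E}_t^{(1)}\|^2$ and $\eta_t\avg\ls{\|\mc{E}_t^{(3)}\|^2|x_t}$ error contributions. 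Summed with the $-\eta_t\|\nabla\tilde{f}(x_t)\|^2$ coming from the first piece of the decomposition, this yields the net $-\eta_t/2$ coefficient on $\|\nabla\tilde{f}(x_t)\|^2$.

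For the quadratic term I would expand $\|\ZO\|^2$ using the same decomposition, once more invoke $\avg\ls{\mc{E}_t^{(2)}|x_t}=0$ to kill the cross terms of $\mc{E}_t^{(2)}$ with the $x_t$-measurable pieces $\nabla\tilde{f}(x_t)$ and $\mc{E}_t^{(1)}$, and then apply the step-size condition. Since $\eta_t \leq \bar{\eta}/d \leq 1/(2\tildefSmooth)$, we have $\tildefSmooth\eta_t^2 \leq \eta_t/2$; this lets the residual $\|\nabla\tilde{f}(x_t)\|^2$, $\|\mc{E}_t^{(1)}\|^2$, and $\|\mc{E}_t^{(3)}\|^2$ contributions (each carrying a $\tildefSmooth\eta_t^2$ coefficient from the smoothness term) be absorbed into the coefficients established in the previous step, while the $\|\mc{E}_t^{(2)}\|^2$ piece retains the natural $\tildefSmooth\eta_t^2$ coefficient shown in the statement. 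A final application of the tower property delivers the unconditional bound.

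The main obstacle I anticipate is the bookkeeping for the quadratic term: a naive bound such as $\|\ZO\|^2 \leq 4\lr{\|\nabla\tilde{f}(x_t)\|^2 + \sum_{i=1}^{3} \|\mc{E}_t^{(i)}\|^2}$ would introduce a $\Theta(\eta_t)\|\nabla\tilde{f}(x_t)\|^2$ term that wipes out the $-\eta_t/2$ savings from the inner product. To avoid this, one must expand the square explicitly, lean heavily on $\avg\ls{\mc{E}_t^{(2)}|x_t}=0$ to discard cross terms of $\mc{E}_t^{(2)}$ with deterministic (given $x_t$) components, and exploit the smallness of $\tildefSmooth\eta_t$ guaranteed by the step-size assumption to dominate any remaining residuals.
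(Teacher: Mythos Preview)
Your overall strategy---smoothness inequality, the three-term error decomposition, exploiting $\avg\ls{\mc{E}_t^{(2)}\mid x_t}=0$, and the step-size bound $\tildefSmooth\eta_t\le 1/2$---matches the paper's approach. The gap is in the bookkeeping of your two-stage allocation. You spend the entire $\|\nabla\tilde f(x_t)\|^2$ budget in the linear stage (two Young applications each costing $\eta_t/4$, leaving exactly $-\eta_t/2$), and only then turn to the quadratic term. But the quadratic term $\tfrac{\tildefSmooth\eta_t^2}{2}\avg\ls{\|\ZO\|^2}$ inevitably contributes a strictly positive multiple of $\|\nabla\tilde f(x_t)\|^2$ (at least $\tfrac{\tildefSmooth\eta_t^2}{2}$, and more once you Young the surviving cross terms), so ``absorbing'' it into the $-\eta_t/2$ you have already committed yields a net coefficient strictly larger than $-\eta_t/2$, not equal to it. The same over-allocation happens for $\|\mc{E}_t^{(1)}\|^2$ and $\|\mc{E}_t^{(3)}\|^2$: you already hit the target $\eta_t$ in the linear stage, and the quadratic adds more. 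You would prove an inequality of the right shape but with worse constants than the lemma states.

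The paper avoids this by \emph{not} separating the linear and quadratic stages. It first groups $A:=\nabla\tilde f(x_t)+\mc{E}_t^{(1)}+\mc{E}_t^{(3)}$ and bounds the quadratic via $\|A+\mc{E}_t^{(2)}\|^2\le 2\|A\|^2+2\|\mc{E}_t^{(2)}\|^2$ (no conditional-mean argument needed here), then uses $\tildefSmooth\eta_t^2\le \eta_t/2$ to write the combined contribution as $-\eta_t\langle\nabla\tilde f(x_t),A\rangle+\tfrac{\eta_t}{2}\|A\|^2$, and finally completes the square via $2\langle a,b\rangle-\|b\|^2=\|a\|^2-\|a-b\|^2$ to get exactly $-\tfrac{\eta_t}{2}\|\nabla\tilde f(x_t)\|^2+\tfrac{\eta_t}{2}\|\mc{E}_t^{(1)}+\mc{E}_t^{(3)}\|^2$, after which one last $\|a+b\|^2\le 2\|a\|^2+2\|b\|^2$ produces the stated $\eta_t$ coefficients. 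If you want to keep your term-by-term route, you must reserve slack in the linear stage (e.g.\ Young with a tighter parameter) so that the quadratic residuals fit; but then the error coefficients exceed $\eta_t$, and you will not recover the exact statement.
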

\textit{Proof.}
Note that by smoothness of \(\tilde{f}\) function
\begin{align}\label{eq: SmoothnessTildeF}
    \tilde{f}(x_{t+1}) &\leq \tilde{f}(x_t) + \lara{\nabla \tilde{f}(x_t),x_{t+1}-x_t} + \frac{\tildefSmooth}{2}\|x_{t+1}-x_t\|^2 \notag \\
    &=\tilde{f}(x_t) -\eta_t \lara{\nabla \tilde{f}(x_t),\hat{F}(x_t;\delta,v_t)} + \frac{\tildefSmooth\eta_t^2}{2}\|\hat{F}(x_t;\delta_t,v_t)\|^2 \notag \\ 
    &=\tilde{f}(x_t) -\eta_t \lara{\nabla \tilde{f}(x_t),\nabla \tilde{f}(x_t)+\mc{E}^{(1)}_t+\mc{E}^{(2)}_t+\mc{E}^{(3)}_t} + \frac{\tildefSmooth\eta_t^2}{2}\|\nabla \tilde{f}(x_t)+\mc{E}^{(1)}_t+\mc{E}^{(2)}_t+\mc{E}^{(3)}_t\|^2 
\end{align}
where we define 
\begin{align*}
    \mc{E}^{(1)}_t &:=\nabla \tilde{f}_{\delta}(x_t) -\nabla \tilde{f}(x_t)\\ 
     \mc{E}^{(2)}_t&:=
     \frac{d}{\delta_t}(\tilde{f}(\hat{x}_t)-\tilde{f}(x_t))v_t-\nabla \tilde{f}_{\delta_t}(x_t)
     \\
     \mc{E}^{(3)}_t&:=\frac{d}{\delta_t}\left((f(\hat{x}_t, y_{t}^{(K)})v_t -\tilde{f}(\hat{x}_t)v_t ) -(f({x}_t, \tilde{y}_{t}^{(K)})v_t -\tilde{f}({x}_t)v_t ) \right). 
\end{align*}

Taking expectation on both sides of \eqref{eq: SmoothnessTildeF} we obtain 
\begin{align*}
    \avg\ls{\tilde{f}(x_{t+1})} &\leq 
     \avg\ls{ \tilde{f}(x_t)}
     -\eta_t
     \avg\ls{ 
     \lara{
     \nabla \tilde{f}(x_t),\nabla\tilde{f}(x_t)+\mc{E}^{(1)}_t+\mc{E}^{(2)}_t+\mc{E}^{(3)}_t
     } 
     }
    \\&\quad +\frac{\tildefSmooth \eta_t^2}{2}\avg\ls{\|\nabla \tilde{f}(x_t)+\mc{E}^{(1)}_t+\mc{E}^{(2)}_t+\mc{E}^{(3)}_t\|^2} 
    \\ 
    &= \avg\ls{ \tilde{f}(x_t)}-\eta_t\avg\ls{ \lara{\nabla \tilde{f}(x_t),\nabla \tilde{f}(x_t)+\mc{E}^{(1)}_t+\mc{E}^{(3)}_t} }+\frac{\tildefSmooth \eta_t^2}{2}\avg\ls{\|\nabla \tilde{f}(x_t)+\mc{E}^{(1)}_t+\mc{E}^{(2)}_t+\mc{E}^{(3)}_t\|^2} \\ 
    &\leq \avg\ls{ \tilde{f}(x_t)}-\eta_t\avg\ls{ \lara{\nabla \tilde{f}(x_t),\nabla \tilde{f}(x_t)+\mc{E}^{(1)}_t+\mc{E}^{(3)}_t} }\\&\quad +{\tildefSmooth \eta_t^2}\lr{\avg\ls{\|\nabla \tilde{f}(x_t)+\mc{E}^{(1)}_t+\mc{E}^{(3)}_t\|^2} + \avg\ls{\|\mc{E}_t^{(2)}\|^2} } 
\end{align*}
where the first equality and last inequality follows by noting that \(\avg\ls{\mc{E}_t^{(2)}|x_t} = 0\) from Lemma \ref{alg: ZerothOrderTwoPointAlgorithm}. Next, choosing \(\eta_t\leq \frac{1}{2\tildefSmooth}\) we obtain 
\begin{align*}
    \avg\ls{\tilde{f}(x_{t+1})} &\leq  \avg\ls{ \tilde{f}(x_t)}-\frac{\eta_t}{2}\bigg(2\avg\ls{ \lara{\nabla \tilde{f}(x_t),\nabla \tilde{f}(x_t)+\mc{E}^{(1)}_t+\mc{E}^{(3)}_t} }-\avg\ls{\|\nabla \tilde{f}(x_t)+\mc{E}^{(1)}_t+\mc{E}^{(3)}_t\|^2} \bigg)\\&\quad +{\tildefSmooth \eta_t^2}\lr{ \avg\ls{\|\mc{E}_t^{(2)}\|^2} }  \\ 
    &= \avg\ls{ \tilde{f}(x_t)}-\frac{\eta_t}{2}\lr{\avg\ls{\|\nabla \tilde{f}(x_t)\|^2 } - \avg\ls{ \|\mc{E}_t^{(1)} + \mc{E}_t^{(3)}\|^2 } }+{\tildefSmooth \eta_t^2} \avg\ls{\|\mc{E}_t^{(2)}\|^2} 
\end{align*} 
where the equality follows by completing the squares.

\begin{lemma}[Restated]\label{lem: ErrorBoundsAppendix}
The errors \(\avg\ls{\|\mc{E}_t^{(i)}\|^2 }\) for \(i\in \{1,2,3\}\) are bounded as follows 
\begin{equation}
    \begin{aligned}
    &\avg\ls{\|\mc{E}_t^{(1)}\|^2}\leq \frac{\tildefSmooth^2\delta_t^2d^2}{4},\quad 
    \avg\ls{\|\mc{E}_t^{(2)}\|^2}\leq 4d^2\tildefLipschitz^2 \\
    &\avg\ls{\|\mc{E}_t^{(3)}\|^2}\leq  \frac{d^2}{\delta_t^2}\fyLipschitz^2\left(2\alpha^t e_0 + 
    2C_6\sum_{k=0}^{t-1}\alpha^{t-k} \eta_k^2 + C_4\sum_{k=0}^{t-1}\alpha^{t-k}\delta_k^2\right)
    \end{aligned}
\end{equation}
\end{lemma}

\textit{Proof.}
We begin with bounding the terms \(\avg\ls{\|\mc{E}_t^{(i)}\|^2}\) for \(i\in \{1,2,3\}\). We note that 
\begin{align}\label{eq: Err1}
    &\avg\ls{\|\mc{E}_t^{(1)}\|^2} = \avg\ls{ \|\nabla \tilde{f}_{\delta_t}(x_t) -\nabla \tilde{f}(x_t)\|^2}\notag \\
    &=\avg\ls{\bigg\|\avg\ls{\frac{d}{\delta_t}(\tilde{f}(\hat{x}_t)-\tilde{f}(x_t))v_t-\nabla \tilde{f}(x_t)\bigg|x_t}\bigg\|^2}\notag \\
     & =\frac{d^2}{\delta_t^2}\avg\ls{\bigg\|\avg\ls{(\tilde{f}(\hat{x}_t)-\tilde{f}(x_t))v_t-\delta_t\avg[v_tv_t^\top]\nabla \tilde{f}(x_t)\bigg|x_t}\bigg\|^2}\notag \\
     &=\frac{d^2}{\delta_t^2}\avg\ls{\bigg\|\avg\ls{v_t\lr{\tilde{f}(\hat{x}_t)-\tilde{f}(x_t)-\delta_tv_t^\top\nabla \tilde{f}(x_t)}\bigg|x_t}\bigg\|^2}\notag \\ 
     &\leq  \frac{d^2}{\delta_t^2} \avg\ls{\|\tilde{f}(\hat{x}_t)-\tilde{f}(x_t)-\delta_tv_t^\top\nabla \tilde{f}(x_t)\|^2}
     \notag \\
     &\leq  \frac{d^2}{\delta_t^2}\frac{\tildefSmooth^2 \delta_t^4}{4} = \frac{\tildefSmooth^2\delta_t^2d^2}{4},
\end{align}
where the first inequality is due to Jensen's inequality and the last inequality is due to \(\tildefSmooth-\)smoothness of \(\tilde{f}\).
Next, we bound \(\|\mc{E}^{(2)}_t\|\) as follows 
\begin{align}\label{eq: E2_Error2}
&\|\mc{E}^{(2)}_t\|
    =\bigg\|\frac{d}{\delta_t}(\tilde{f}(\hat{x}_t)-\tilde{f}(x_t))v_t-\nabla \tilde{f}_{\delta_t}(x_t) \bigg\| \\& 
    \leq \bigg\|\frac{d}{\delta_t}(\tilde{f}(\hat{x}_t)-\tilde{f}(x_t))v_t\bigg\| + \bigg\|\mb{E}\left[\frac{d}{\delta_t}(\tilde{f}(\hat{x}_t)-\tilde{f}(x_t))v_t\Big|x_t\right]\bigg\| \notag  \\ 
    &\leq 2 \bigg\|\frac{d}{\delta_t}(\tilde{f}(\hat{x}_t)-\tilde{f}(x_t))v_t\bigg\| \notag \leq 2\frac{d}{\delta_t} \tildefLipschitz\|\hat{x}_t-x_t\| \leq 2{d} \tildefLipschitz 
\end{align}
Finally we bound \(\|\mc{E}^{(3)}_t\|\). Note that 
\begin{align}\label{eq: E3_ErrorLat}
    &{\|\mc{E}^{(3)}_t\|^2}
    = \bigg\| \frac{d}{\delta_t}\bigg((f(\hat{x}_t, y_{t}^{(K)})v_t -\tilde{f}(\hat{x}_t)v_t )\notag \\&\hspace{1cm}-(\tilde{f}({x}_t, \tilde{y}_{t}^{(K)})v_t -\tilde{f}({x}_t)v_t ) \bigg) \bigg\|^2 \notag \\
    &\leq 2\frac{d^2}{\delta_t^2}\bigg(\bigg\|f(\hat{x}_t, y_{t}^{(K)}) -\tilde{f}(\hat{x}_t,\br(\hat{x}_t))\bigg\|^2 \\&\hspace{1cm}+ \bigg\|f({x}_t, \tilde{y}_{t}^{(K)}) -\tilde{f}({x}_t,\br(x_t))\bigg\|^2\bigg)\notag 
    \\& \leq 2\frac{d^2}{\delta_t^2}\fyLipschitz^2\left(\underbrace{\|y_{t}^{(K)}-\br(\hat{x}_t)\|^2}_{\text{Term A}}+\underbrace{\|\tilde{y}_t^{(K)}-\br(x_t)\|^2}_{\text{Term B}}\right)
\end{align}
Recall from Assumption \ref{assm: FollowerUpdatesConvergence}   it holds that
\begin{align}\label{eq: TermA}
   &\text{Term A} = \|y_{t}^{(K)}-\br(\hat{x}_t)\|^2 \leq \alpha  \|y_{t}^{(0)}-\br(\hat{x}_t)\|^2 \notag \\
   &=\alpha  \|\tilde{y}_{t-1}^{(K)}-\br(\hat{x}_t)\|^2 =\alpha  \|\tilde{y}_{t-1}^{(K)}-\br({x}_{t-1})+\br({x}_{t-1})-\br(\hat{x}_t)\|^2\notag \\
    &=2 \alpha  (\|\tilde{y}_{t-1}^{(K)}-\br({x}_{t-1})\|^2+\brLipschitz\|\hat{x}_{t}-{x}_{t-1}\|^2)\notag \\
     &\leq  2 \alpha  (\|\tilde{y}_{t-1}^{(K)}-\br({x}_{t-1})\|^2+2\brLipschitz\|{x}_{t}-{x}_{t-1}\|^2+ 2\brLipschitz \|\delta_t v_t \|^2)\notag  \\ 
    &\leq  2 \alpha  (\|\tilde{y}_{t-1}^{(K)}-\br({x}_{t-1})\|^2+ 2\brLipschitz \|\delta_t v_t \|^2\notag \\&\hspace{1cm}+2\brLipschitz\eta_{t-1}^2\frac{d^2}{\delta_{t-1}^2}{\|f(\hat{x}_{t-1},y_{t-1}^{(K)})-f(x_{t-1},\tilde{y}_{t-1}^{(K)})\|^2}\notag )\notag  \\ 
    &\leq  2 \alpha  (\|\tilde{y}_{t-1}^{(K)}-\br({x}_{t-1})\|^2+ 2\brLipschitz \delta_{t-1}^2\notag\\&\hspace{1cm}+2\brLipschitz\eta_{t-1}^2\frac{d^2}{\delta_{t-1}^2}\underbrace{\|f(\hat{x}_{t-1},y_{t-1}^{(K)})-f(x_{t-1},\tilde{y}_{t-1}^{(K)})\|^2}_{\text{Term C}}) 
\end{align}
where the last inequality follows by noting that \((\delta_t)\) is a decreasing sequence. 

Similarly, we note that 
\begin{align}\label{eq: tildeyDiff}
    &\text{Term B} = \|\tilde{y}_{t}^{(K)}-\br({x}_t)\|^2 \leq \alpha  \|\tilde{y}_{t-1}^{(K)}-\br({x}_t)\|^2 \notag \\ 
    &=\alpha  \|\tilde{y}_{t-1}^{(K)}-\br({x}_{t-1})+\br({x}_{t-1})-\br({x}_t)\|^2 \notag \\
    &=2\alpha  (\|\tilde{y}_{t-1}^{(K)}-\br({x}_{t-1})\|^2+\brLipschitz\|{x}_{t}-{x}_{t-1}\|^2)\notag \\
    &\leq  2 \alpha  \bigg(\|\tilde{y}_{t-1}^{(K)}-\br({x}_{t-1})\|^2 \notag \\ &\hspace{1cm}+\brLipschitz\eta_{t-1}^2\frac{d^2}{\delta_{t-1}^2}\underbrace{\|f(\hat{x}_{t-1},y_{t-1}^{(K)})-f(x_{t-1},\tilde{y}_{t-1}^{(K)})\|^2}_{\text{Term C}}\bigg)
\end{align}
To finish the bounds for Term A and Term B, we need to bound Term C 
\begin{align}\label{eq: f_DiffLat}
   & \text{Term C} = \|f(\hat{x}_{t-1},y_{t-1}^{(K)})-f(x_{t-1},\tilde{y}_{t-1}^{(K)})\|^2 \notag \\ &= \|f(\hat{x}_{t-1},y_{t-1}^{(K)})-f(x_{t-1},y_{t-1}^{(K)})\notag \\&\hspace{1cm}+f(x_{t-1},y_{t-1}^{(K)})-f(x_{t-1},\tilde{y}_{t-1}^{(K)})\|^2 \notag \\ 
    &\leq 2(\|f(\hat{x}_{t-1},y_{t-1}^{(K)})-f(x_{t-1},y_{t-1}^{(K)})\|^2 \notag \\ &\hspace{1cm}+\|f(x_{t-1},y_{t-1}^{(K)})-f(x_{t-1},\tilde{y}_{t-1}^{(K)})\|^2) \notag \\ 
    &=2(\fxLipschitz^2\|\hat{x}_{t-1}-x_{t-1}\|^2+\fyLipschitz^2\|y_{t-1}^{(K)}-\tilde{y}_{t-1}^{(K)}\|^2) \notag  \\ 
    &=2(\fxLipschitz^2\delta_{t-1}^2+\fyLipschitz^2\underbrace{\|y_{t-1}^{(K)}-\tilde{y}_{t-1}^{(K)}\|^2}_{\text{Term D}})
\end{align}
Note that from Assumption \ref{assm: FollowerUpdatesDiff} it follows that \(\text{Term D} \leq C^2\delta_{t-1}^2\). 
% Note that from Lemma \ref{lem: BoundYIter} it follows that 
% \begin{align}\label{eq: TermD}
% \end{align}
Consequently, we obtain the following bound
\begin{align}\label{eq: F_diff}
    \text{Term C} &= \|f(\hat{x}_{t-1},y_{t-1}^{(K)})-f(x_{t-1},\tilde{y}_{t-1}^{(K)})\|^2 \notag \\&\leq  2(\fxLipschitz^2
    \delta_{t-1}^2+C_1^2\delta_{t-1}^2),
\end{align}
where \(C_1 := \fyLipschitz^2 C^2\). Consequently, we can bound \eqref{eq: TermA} as  
\begin{align*}
    &\|y_{t}^{(K)}-\br(\hat{x}_t)\|^2 \leq  2 \alpha  \bigg (\|\tilde{y}_{t-1}^{(K)}-\br({x}_{t-1})\|^2
   \\&\hspace{1cm} +\brLipschitz\eta_{t-1}^2\frac{d^2}{\delta_{t-1}^2}\left( 2(\fxLipschitz^2
    \delta_{t-1}^2+C_1\delta_{t-1}^2) \right)+ 2\brLipschitz \delta_{t-1}^2\bigg)\\
    &= 2 \alpha  \left(\|\tilde{y}_{t-1}^{(K)}-\br({x}_{t-1})\|^2+2 \brLipschitz \eta_{t-1}^2d^2 (\fxLipschitz^2
    +C_1) + 2\brLipschitz \delta_{t-1}^2\right) 
\end{align*}

Define \(e_t = \|y_t^{(K)}-\br(\hat{x}_{t})\|^2\), \(\tilde{e}_t = \|\tilde{y}_{t}^{(K)}-\br(\hat{x}_{t})\|^2\). Then we have 
\begin{align}\label{eq: ErrBound1}
    e_t &\leq \bar{\alpha} \left( \tilde{e}_{t-1} + C_2d^2\eta_{t-1}^2 + C_3d^2{\eta_{t-1}^2} + C_4\delta_{t-1}^2 \right), \\ 
    \tilde{e}_t &\leq \bar{\alpha} \left( \tilde{e}_{t-1} + C_2d^2\eta_{t-1}^2 + C_3d^2{\eta_{t-1}^2}\right),
\end{align}
where \(\bar{\alpha} = 2\alpha, C_2 = 2\brLipschitz\fxLipschitz^2, C_3=2\brLipschitz C_1,\) \(C_4 = 4\brLipschitz\) and \(C_5=2\brLipschitz\).
% Next if we choose the step size \(\gamma \leq \frac{1}{KC_5} \log(\sqrt{\epsilon C_5})\) then it holds that \(\gamma K \exp(\gamma C_5 K) \leq \epsilon \). Indeed, we note that 
% \begin{align*}
% \gamma \leq \frac{1}{KC_5 } \log(\sqrt{\epsilon C_5 }) \implies  C_5 \gamma K \leq \log(\sqrt{\epsilon C_5 }) \implies  C_5  \gamma K \exp(\gamma C_5  K) \leq \epsilon C_5  \implies \gamma K \exp(\gamma C_5  K) \leq \epsilon.
% \end{align*}
% where the third implication holds by enforcing that \(\exp(\gamma C_5  K) \leq \sqrt{\epsilon C_5 }\) which ensures that second inequality hold.
Consequently, it holds that 
\begin{align}\label{eq: TildeErrEqLat}
    \tilde{e}_{t} &\leq \bar\alpha\left(\tilde{e}_{t-1}+ C_2d^2\eta_{t-1}^2+ C_3 d^2 {\eta_{t-1}^2} \right)\notag \\
    &\leq \bar\alpha^t \tilde{e}_0+\sum_{k=0}^{t-1} \bar\alpha^{t-k} \left(C_2d^2\eta_k^2 + C_3 d^2 {\eta_k^2}\right) \notag \\ 
    &\leq \bar\alpha^t \tilde{e}_0 + C_6d^2 \sum_{k=0}^{t-1}\bar\alpha^{t-k} \eta_k^2,
\end{align}
where \(C_6 = C_2+C_3\). Moreover, we also note that  
\begin{align}\label{eq: ErrEqLat}
    e_t &\leq \bar\alpha (\tilde{e}_{t-1}+C_2d^2\eta_{t-1}^2 + C_3 d^2 {\eta_{t-1}^2}+C_4\delta_{t-1}^2) 
  \notag   \\ 
  &\leq \bar{\alpha}(\bar\alpha^{t-1} \tilde{e}_0 + C_6d^2 \sum_{k=0}^{t-2}\bar\alpha^{t-1-k} \eta_k^2 + C_6 d^2 \eta_{t-1}^2 + C_4\delta_{t-1}^2)
  \notag   \\
  &\leq \bar\alpha^{t} \tilde{e}_0 + C_6d^2 \sum_{k=0}^{t-1}\bar\alpha^{t-k} \eta_k^2  + \bar{\alpha}C_4\delta_{t-1}^2
  % \notag   
  % \\
  % & 
  % \leq \bar\alpha^t e_0 + \sum_{k=0}^{t-1} \alpha^{t-k} \left(C_2d^2\eta_k^2 + C_3 d^2 {\eta_k^2}+C_4\delta_{k}^2  \right) \notag \\ 
  %   &\leq \bar\alpha^t \tilde{e}_0 + 
  %   C_6d^2\sum_{k=0}^{t-1}\alpha^{t-k} \eta_k^2 + C_4\sum_{k=0}^{t-1}\alpha^{t-k}\delta_k^2,
\end{align}

Combining \eqref{eq: ErrEqLat} and \eqref{eq: TildeErrEqLat} in \eqref{eq: E3_ErrorLat}. We obtain that 
\begin{align}\label{eq: E_3FinalBound}
    \|\mc{E}^{(3)}_t\|^2 \leq \frac{d^2}{\delta_t^2}\fyLipschitz^2\left(2\bar\alpha^t e_0 + 
    2C_6\sum_{k=0}^{t-1}\bar\alpha^{t-k} \eta_k^2 + C_4\sum_{k=0}^{t-1}\bar\alpha^{t-k}\delta_k^2\right).
\end{align}

\begin{lemma}[Restated]\label{lem: PseudoTrajectoryAppendix}
The trajectory \(z_s(\cdot)\) is an asymptotic pseudotrajectory of \eqref{eq: UpperLevel}. That is, for any positive integer \(S\)
\begin{align*}
    \lim_{t\rightarrow \infty} \sup_{s\in [0,S]} \avg\ls{\|x_{t+s}-z_s(\hat{x}_t)\|^2} = 0
\end{align*}
\end{lemma}
\textit{Proof.}
For any \(t\), we see that 
\begin{align*}
    &\|x_{t+s+1}-z_{s+1}(x_t)\|^2 = \|x_{t+s}-\eta_{t+s}\ZO(x_t,\delta_t,v_t) - z_s(\hat{x}_t) + \eta_{t+s}\nabla \tilde{f}(z_s(\hat{x}_t))\|^2 \\ 
    &\leq  2\|x_{t+s}-z_s(\hat{x}_t)\|^2 + 2\eta_{t+s}^2\|\ZO(x_{t+s},\delta_{t+s},v_{t+s})-\nabla \tilde{f}(z_s(\hat{x}_t))\|^2 \\ 
    &= 2\|x_{t+s}-z_s(\hat{x}_t)\|^2 + 2\eta_{t+s}^2 \|\ZO(x_{t+s},\delta_{t+s},v_{t+s})-\nabla \tilde{f}(x_{t+s})+\nabla \tilde{f}(x_{t+s})-\nabla \tilde{f}(z_s(\hat{x}_t))\|^2 \\ 
    &\leq 2(1+4\eta_{t+s}^2\tildefSmooth^2)\|x_{t+s}-z_s(\hat{x}_t)\|^2+4\eta_{t+s}^2\|\ZO(x_{t+s},\delta_{t+s},v_{t+s})-\nabla \tilde{f}(x_{t+s})\|^2 \\ 
    &= C_1\|x_{t+s}-z_s(\hat{x}_t)\|^2 + C_2 \eta_{t+s}^2 \|\ZO(x_{t+s},\delta_{t+s},v_{t+s})-\nabla \tilde{f}(x_{t+s})\|^2\\
    &\leq C_1^s\|x_{t}-z_0(\hat{x}_t)\|^2 + \sum_{k=1}^{s}C_2C_1^{k}\eta_{t+s-k}^2\|\ZO(x_{t+s-k},\delta_{t+s-k},v_{t+s-k})-\nabla \tilde{f}(x_{t+s-k})\|^2 
\end{align*}
where \(C_1 = 2(1+4(\bar{\eta}^2/d^2)\tildefSmooth^2), C_2 = 4\). 
Next we note that \(x_t=z_0(x_t)\). Consequently, 
\begin{align*}
   \avg\ls{ \|x_{t+s+1}-z_{s+1}(\hat{x}_t)\|^2} \leq \sum_{k=1}^{s}C_1^{s-k}\eta_{t+k}^2\lr{\avg\ls{\|\mc{E}^{(1)}_{t+k}\|^2 + \|\mc{E}^{(2)}_{t+k}\|^2+\|\mc{E}^{(3)}_{t+k}\|^2}} 
%   \|\ZO(x_{t+k},\delta_{t+k},v_{t+k})-\nabla \tilde{f}(x_{t+k})\| 
\end{align*}
Therefore, 
\begin{align*}
    &\sup_{s\in[0,S]} \avg\ls{ \|x_{t+s+1}-z_{s+1}(\hat{x}_t)\|^2} \\ 
   &\leq C_1^S\sum_{k=1}^{S}\eta_{t+k}^2\lr{D_1\delta_{t+k}^2+D_2+\frac{d^2}{\delta_{t+k}^2}\fyLipschitz^2\left(2\alpha^{t+k} e_0 + 
    2C_6\sum_{p=0}^{t+k-1}\alpha^{t+k-p } \eta_p^2 + C_4\sum_{p=0}^{t+k-1}\alpha^{t+k-p}\delta_p^2\right)} \\ 
    &\leq \mc{O}\lr{ C_1^S\lr{\eta_t^2\delta_t^2S+ \eta_t^2 S+\eta_t \alpha^{t} S + \underbrace{\eta_t\sum_{k=1}^{S} \frac{\eta_{t+k}}{\delta_{t+k}^2}\sum_{p=0}^{t+k-1}\alpha^{t+k-p}\eta_p^2}_{\text{Term A}} + \underbrace{\eta_t \sum_{k=1}^{S}\frac{\eta_{t+k}}{\delta_{t+k}^2}\sum_{p=0}^{t+k-1}\alpha^{t+k-p}\delta_p^2 }_{\text{Term B}}}}
\end{align*}
Next, we analyze Term A and Term B by substituting \(\eta_t = \bar{\eta} (t+1)^{-1/2}d^{-1}, \delta_t = \bar{\delta} (t+1)^{-1/4}d^{-1/2}\). First, we see that 
\begin{align*}
    \text{Term A} &= \eta_t\sum_{k=1}^{S} \frac{\eta_{t+k}}{\delta_{t+k}^2}\sum_{p=0}^{t+k-1}\alpha^{t+k-p}\eta_p^2 \\  
    &= \frac{\bar{\eta}}{\bar{\delta}^2}\eta_t\sum_{k=1}^{S} \sum_{p=0}^{t+k-1}\alpha^{t+k-p}\eta_p^2 \\
    &\leq \frac{\bar{\eta}}{\bar{\delta}^2}\eta_t\alpha^t\sum_{k=1}^{S} \alpha^{k} \sum_{p=0}^{t+k-1}\eta_p^2  \\ 
    &\leq \frac{\bar{\eta}^3}{d^2\bar{\delta}^2}\eta_t\alpha^t\sum_{k=1}^{S}\alpha^{k} \sqrt{t+k} \\
    &\leq \frac{\bar{\eta}^3}{d^2\bar{\delta}^2}\eta_{t}\frac{\alpha^{t+1}}{1-\alpha}\sqrt{t+S} 
\end{align*}

Next, we analyze Term B 
\begin{align*}
    \text{Term B} &= \eta_t\sum_{k=1}^{S} \frac{\eta_{t+k}}{\delta_{t+k}^2}\sum_{p=0}^{t+k-1}\alpha^{t+k-p}\delta_p^2 \\  
    &= \frac{\bar{\eta}}{\bar{\delta}^2}\eta_t\sum_{k=1}^{S} \sum_{p=0}^{t+k-1}\alpha^{t+k-p}\delta_p^2 \\
    &\leq \frac{\bar{\eta}}{\bar{\delta}^2}\eta_t\alpha^t\sum_{k=1}^{S} \alpha^{k} \sum_{p=0}^{t+k-1}\delta_p^2  \\ 
    &\leq \frac{\bar{\eta}}{d}\eta_t\alpha^t\sum_{k=1}^{S}\alpha^{k} \lr{t+k}^{3/4} \\
    &\leq \frac{\bar{\eta}^3}{d^2\bar{\delta}^2}\eta_{t}\frac{\alpha^{t+1}}{1-\alpha}\lr{t+S}^{3/4} 
\end{align*}

To summarize, we obtain

\begin{align*}
    &\sup_{s\in[0,S]} \avg\ls{ \|x_{t+s+1}-z_{s+1}(\hat{x}_t)\|^2} \\& 
    \leq \mc{O}\lr{ C_1^S\lr{\eta_t^2\delta_t^2S+ \eta_t^2 S+\eta_t \alpha^{t} S + \eta_{t}{\alpha^{t+1}}\sqrt{t+S}  +\eta_{t}{\alpha^{t+1}}\lr{t+S}^{3/4}}} \\ 
    &= \mc{O}\lr{ C_1^S\lr{\frac{S}{(t+1)^{3/2}}+ \frac{S}{(t+1)} +\frac{ \alpha^{t} S}{\sqrt{t+1}} + \frac{\sqrt{t+S}}{\sqrt{t+1}} {\alpha^{t+1}}  +\frac{\lr{t+S}^{3/4}}{\sqrt{t+1}}{\alpha^{t+1}}}} 
\end{align*}
Taking limit \(t\ra\infty\) we obtain the desired conclusion. 

\subsection{When does Assumption \ref{assm: FollowerUpdatesDiff} hold?}
Consider the scenario of bi-level optimization when the lower level problem is just a convex optimization problem with objective function \(g(x,\cdot)\) a popular choice of \(H\) is projected gradient descent: 
\begin{align}\label{eq: lowerlevelBilevel}
    y^{(k+1)}(x) = H(y^{(k)};x)= \proj_Y(y^{(k)}(x)-\stepY \nabla_y g(x,y^{(k)}(x)).
\end{align}
% Next if we choose the step size \(\gamma \leq \frac{1}{KC_5} \log(\sqrt{\epsilon C_5})\) then it holds that \(\gamma K \exp(\gamma C_5 K) \leq \epsilon \). Indeed, we note that 
% \begin{align*}
% \gamma \leq \frac{1}{KC_5 } \log(\sqrt{\epsilon C_5 }) \implies  C_5 \gamma K \leq \log(\sqrt{\epsilon C_5 }) \implies  C_5  \gamma K \exp(\gamma C_5  K) \leq \epsilon C_5  \implies \gamma K \exp(\gamma C_5  K) \leq \epsilon.
% \end{align*}
% where the third implication holds by enforcing that \(\exp(\gamma C_5  K) \leq \sqrt{\epsilon C_5 }\) which ensures that second inequality hold.

\begin{proposition}\label{lem: BoundYIter}
Consider the bilevel optimization problem with convex-lower level optimization problem \eqref{eq: lowerlevelBilevel}. Then for any \(x\in X\) the difference between \(y^{(K)}(x)\) and \(\tilde{y}^{(K)}(x)\) is bounded as
\begin{equation}
    \|y^{(K)}(\hat{x})-{y}^{(K)}(x)\| \leq K\gxLipschitz \gamma \delta_{t} \exp(\gamma \gyLipschitz K)
\end{equation}
\end{proposition}
\textit{Proof.}
We note that for any \(k\in [K]\)
\begin{align*}
    y^{(k)}(\hat{x}) &= \mc{P}_Y\left( y^{(k-1)}(\hat{x}) - \gamma\nabla g(\hat{x},y^{(k-1)}(\hat{x}))\right), \\ 
    {y}^{(k)}(x) &= \mc{P}_Y\left({y}^{(k-1)}(x) - \gamma \nabla g(x,{y}^{(k-1)}(x))\right), 
\end{align*}
where we impose that \(y^{(0)}(x) = y^{(0)}(\hat{x})\) due to Algorithm \ref{alg: ZerothOrderTwoPointAlgorithm}.
Then
\begin{align*}
    \|y^{(k)}(\hat x)-{y}^{(k)}(x)\|
    &= \|\proj_Y(y^{(k-1)}(\hat x)-\gamma \nabla g(\hat{x},y^{(k-1)}(\hat x)))-\proj_Y({y}^{(k-1)}(x)-\gamma \nabla g({x},{y}^{(k-1)}(x))\| \\ 
    &\leq \|y^{(k-1)}(\hat x)-{y}^{(k-1)}(x)\| + \gamma \|\nabla g(\hat{x},y^{(k-1)}(\hat x))-\nabla g({x},{y}_t^{(k-1)}(x))\|
    \\&\leq \gamma \sum_{\ell=0}^{k-1}\|\nabla g(\hat{x},y^{(\ell)}(\hat x))-\nabla g({x},{y}^{(\ell)}(x))\| \\ 
    &\leq \gamma\sum_{\ell=0}^{k-1}\left(\|\nabla g(\hat{x},y^{(\ell)}(\hat{x}))-\nabla g(\hat {x},y^{(\ell)}(x))\|+ \|\nabla g(\hat {x},y^{(\ell)}(x))-\nabla g({x},{y}^{(\ell)}(x))\|\right)\\
    &\leq \gamma  k\gxLipschitz\delta+\gamma\gyLipschitz\sum_{\ell=0}^{k-1}\|y^\ell(\hat{x}) - {y}^\ell(x)\| \\ &\leq K\gxLipschitz\gamma \delta + \gamma\gyLipschitz\sum_{\ell=0}^{k-1}\|y^\ell(\hat x) - {y}^\ell(x)\|
\end{align*}
By discrete Gronwall inequality stated in Lemma \ref{lem: DiscreteGronwall}  we obtain
\begin{align*}
    \|y^{(k)}(\hat x)-{y}^{(k)}(x)\|\leq K\gxLipschitz\gamma \delta \exp(\gamma \gyLipschitz k), \quad \forall \ k\in [K].
\end{align*}

\end{document}